\documentclass{amsart}
\usepackage[utf8]{inputenc}

\usepackage{amssymb}
\usepackage{amsmath}
\usepackage{amsthm}
\usepackage[pdftex]{graphicx}
\usepackage{tikz}
\usepackage{url}
\usepackage{color}
\usepackage{xifthen}
\usepackage{xcolor}
\usepackage{mathtools}
\usepackage[backend=biber,style=alphabetic,sorting=nty,doi=false,isbn=false,url=false,eprint=true]{biblatex}
\usepackage{hyperref}
\usepackage{geometry}
\geometry{left=35mm,right=35mm,top=35mm,bottom=35mm}

\renewbibmacro{in:}{}
\addbibresource{bib.bib}


\renewcommand{\P}{\mathbb{P}}
\newcommand{\Q}{\mathbb{Q}}

\newcommand{\C}{\mathbb{C}}

\newcommand{\dom}{\operatorname{dom}}

\newcommand\forces{\Vdash}

\newcommand{\frakc}{\mathfrak{c}}
\newcommand{\frakb}{\mathfrak{b}}
\newcommand{\frakd}{\mathfrak{d}}
\newcommand{\frakv}{\mathfrak{v}}
\newcommand{\Pow}{\mathcal{P}}
\newcommand{\non}{\operatorname{non}}
\newcommand{\cov}{\operatorname{cov}}

\newcommand{\cof}{\operatorname{cof}}
\newcommand{\Cof}{\mathbf{Cof}}
\newcommand{\Cov}{\mathbf{Cov}}
\newcommand{\D}{\mathbf{D}}
\newcommand{\Lc}{\mathbf{Lc}}
\newcommand{\wLc}{\mathbf{wLc}}
\newcommand{\nul}{\mathsf{null}}
\newcommand{\meager}{\mathsf{meager}}

\newcommand{\from}{\colon}
\newcommand{\cf}{\operatorname{cf}}
\newcommand{\KT}{\operatorname{KT}}
\newcommand{\SAT}{\mathrm{SAT}}
\newcommand{\Con}{\operatorname{Con}}
\newcommand{\LangL}{\mathcal{L}}
\newcommand{\scrA}{\mathcal{A}}
\newcommand{\scrB}{\mathcal{B}}
\newcommand{\mcf}{\mathfrak{mcf}}
\newcommand{\CH}{\mathrm{CH}}

\newcommand{\ZFC}{\mathrm{ZFC}}

\newcommand{\seq}[1]{{\langle#1\rangle}}
\newcommand{\quot}[1]{\ulcorner #1 \urcorner}
\DeclarePairedDelimiter\abs{\lvert}{\rvert}
\DeclarePairedDelimiterX{\norm}[1]{\lVert}{\rVert}{#1}
\DeclarePairedDelimiter\floor{\lfloor}{\rfloor}

\renewcommand\emptyset{\varnothing}
\renewcommand\subset{\subseteq}
\renewcommand{\setminus}{\smallsetminus}

\newcommand{\needtocheck}[1][]{%
	\ifthenelse{\equal{#1}{}}{%
		\textcolor{blue}{[NeedToCheck]}%
	}{%
		\textcolor{blue}{[NeedToCheck: #1]}%
	}%
}

\newcommand{\todo}[1][]{%
	\ifthenelse{\equal{#1}{}}{%
		\textcolor{red}{[TODO]}%
	}{%
		\textcolor{red}{[TODO: #1]}%
	}%
}

\theoremstyle{definition}
\newtheorem{thm}{Theorem}[section]
\newtheorem*{thm*}{Theorem}
\newtheorem{defi}[thm]{Definition}
\newtheorem*{defi*}{Definition}
\newtheorem{lem}[thm]{Lemma}
\newtheorem*{lem*}{Lemma}
\newtheorem{fact}[thm]{Fact}
\newtheorem*{fact*}{Fact}
\newtheorem*{formula*}{Formula}

\newtheorem*{prop*}{Proposition}

\newtheorem*{exm*}{Example}
\newtheorem{rmk}[thm]{Remark}
\newtheorem*{rmk*}{Remark}
\newtheorem{cor}[thm]{Corollary}
\newtheorem*{cor*}{Corollary}
\newtheorem*{notation*}{Notation}
\newtheorem*{convention*}{Convention}
\newtheorem{question}[thm]{Question}

\title{Keisler's Theorem and Cardinal Invariants}
\author{Tatsuya Goto}
\date{\today}
\address{
	\newline
	Graduate School of Information Science\newline
	Nagoya University\newline
	Furo-cho, Chikusa-ku, Nagoya 464-8601\newline
	JAPAN
}

\email{goto.tatsuya@k.mbox.nagoya-u.ac.jp}

\newcommand{\myfigure}[3]{ %
	\begin{tikzpicture}[xscale=#1, yscale=#2] %
		\node (ch) at (0, -1) {$\mathrm{CH}$}; %
		\node (satc) at (0, -3) {$\SAT(\frakc)$}; %
		\node (ktc) at (0, -5) {$\KT(\frakc)$}; %
		\node (sat1) at (4, -3) {$\SAT(\aleph_1)$}; %
		\node (kt1) at (4, -5) {$\KT(\aleph_1)$}; %
		\node (sat0) at (8, -3) {$\SAT(\aleph_0)$}; %
		\node (kt0) at (8, -5) {$\KT(\aleph_0)$}; %
		\node (ma) at (8, -1) {$\mathrm{MA}$}; %
		\node (b) at (4, -6) {$\frakb = \aleph_1$}; %
		\node (cex) at (8, -6) {$\cov(\nul) \le \frakd$}; %
		\node[align=center] (covmreg) at (8, -2) {$\cov(\meager) = \frakc \land\ 2^{<\frakc} = \frakc$}; %
		\node (covm) at (8, -4) {$\cov(\meager) = \frakc$}; %
		\node[align=center] (covmcf) at (2, -4) {$\cov(\meager) = \frakc$ \\ $\land\ \cf(\frakc) = \aleph_1$}; %
		\draw[double distance=2pt] (ch) -- (satc); %
		\draw[double distance=2pt] (satc) -- (ktc); %
		\draw[->] (ch) -- (ma); %
		\draw[double distance=2pt, line width=#3] (satc) -- (sat1); %
		\draw[->] (sat1) -- (sat0); %
		\draw[->] (ktc) -- (kt1); %
		\draw[->] (kt1) -- (kt0); %
		\draw[->] (sat1) -- (kt1); %
		\draw[->] (ma) -- (covmreg); %
		\draw[double distance=2pt, line width=#3] (covmreg) -- (sat0); %
		\draw[->, line width=#3] (sat0) -- (covm); %
		\draw[->] (covm) -- (kt0); %
		\draw[->, line width=#3] (kt1) -- (b); %
		\draw[->, line width=#3] (kt0) -- (cex); %
		\draw[->] (ch) -- (covmcf); %
		\draw[->] (covmcf) -- (kt1); %
		\draw[->] (covmcf) -- (covm); %
	\end{tikzpicture} %
}

\begin{document}
	\maketitle
	
	\begin{abstract}
		We consider several variants of Keisler's isomorphism theorem.
		We separate these variants by showing implications between them and cardinal invariants hypotheses.
		We characterize saturation hypotheses that are stronger than Keisler's theorem with respect to models of size $\aleph_1$ and $\aleph_0$ by $\CH$ and $\cov(\meager) = \frakc \land 2^{<\frakc} = \frakc$ respectively.
		We prove that Keisler's theorem for models of size $\aleph_1$ and $\aleph_0$ implies $\frakb = \aleph_1$ and $\cov(\nul) \le \frakd$ respectively.
		As a consequence, Keisler's theorem for models of size $\aleph_0$ fails in the random model.
		We also show that for Keisler's theorem for models of size $\aleph_1$ to hold it is not necessary that $\cov(\meager)$ equals $\frakc$.
	\end{abstract}

	\section{Introduction}

	The method of ultrapower is one of the most important ways to construct models.  
	Ultrapowers are models obtained by properly equating the elements of product sets of the models using ultrafilters.
	We consider the problem when there exists an ultrafilter $U$ on $\omega$ such that for two models $\scrA, \scrB$ in a countable language $\LangL$, the respective ultrapowers $\scrA^\omega/U, \scrB^\omega/U$ are isomorphic.	
	Since ultrapowers are elementary extensions of original models, if $\scrA^\omega/U$ and $\scrB^\omega/U$ are isomorphic, then $\scrA$ and $\scrB$ must be elementarily equivalent.
	Keisler showed, under CH, conversely if $\scrA$ and $\scrB$ are elementarily equivalent and have size $\le \frakc$, then for every ultrafilter $U$ over $\omega$, $\scrA^\omega/U$ and $\scrB^\omega/U$ are isomorphic.
	The purpose of this paper is to give necessary conditions and sufficient conditions for when Keisler's theorem holds in a model where CH does not hold, and to separate the variants of Keisler's theorem using those conditions.

	\begin{convention*}
		All ultrafilters considered in this paper are nonprincipal.
	\end{convention*}

	\begin{defi}
		Let $\kappa$ be a cardinal.
		\begin{enumerate}
			\item We say $\KT(\kappa)$ holds if for every countable language  $\LangL$ and $\LangL$-structures $\mathcal{A}, \mathcal{B}$ of size $\le \kappa$ with $\mathcal{A} \equiv \mathcal{B}$, there exists an ultrafilter $U$ over $\omega$ such that $\mathcal{A}^\omega/U \simeq \mathcal{B}^\omega/U$.
			\item We say $\SAT(\kappa)$ holds if there exists an ultrafilter $U$ over $\omega$ such that for every countable language $\LangL$ and every sequence of $\LangL$-structures $(\mathcal{A}_i)_{i\in\omega}$ with each $\mathcal{A}_i$ of size $\le \kappa$, $\prod_{i \in \omega} \mathcal{A}_i / U$ is saturated.
		\end{enumerate}
	\end{defi}
	
	$\SAT(\kappa)$ implies $\KT(\kappa)$ for every $\kappa$.
	Keisler \cite{keisler1961ultraproducts} proved $\mathrm{CH} \Rightarrow \SAT(\frakc)$.
	Recently, Golshani and Shelah \cite{golshani2021keislershelah} proved the converse $\KT(\frakc) \Rightarrow \mathrm{CH}$. In fact, they showed $\neg \KT(\aleph_2)$.
	They also proved that $\cov(\meager) = \frakc \land \cf(\frakc) = \aleph_1$ implies $\KT(\aleph_1)$.
	Another classical result is the theorem by Ellentuck--Rucker \cite{ellentuck1972martin} which shows that $\operatorname{MA}(\sigma\text{-centered})$ implies $\SAT(\aleph_0)$.
	Moreover, Shelah \cite{shelah1992vive} showed $\Con(\neg \KT(\aleph_0))$ by showing that $\frakd < \mathfrak{v}^\forall$ implies $\neg \KT(\aleph_0)$ and that the former is consistent.
	
	In this paper, we prove the implications indicated by thick lines in Figure \ref{implication-figure}.
	
	\begin{figure}[h]
		\centering
		\[ \myfigure{0.9}{1.2}{0.6mm} \]
		\caption{Implications}
		\label{implication-figure}
	\end{figure}
	
	In the rest of this section, we recall basic notions related to cardinal invariants.
	
	\begin{defi}
		\begin{enumerate}
			\item If $X, Y$ are sets and $R$ is a subset of $X \times Y$, we call a triple $(X, Y, R)$ a \textit{relational system}.
			\item For a relational system $\mathcal{A} = (X, Y, R)$, define $\mathcal{A}^\perp = (Y, X, \hat{R})$, where $\hat{R} = \{ (y, x) \in Y \times X : \neg (x \mathrel{R} y)\})$.
			\item For a relational system $\mathcal{A} = (X, Y, R)$, define $\norm{\mathcal{A}} = \min \{ \abs{B} : B \subset Y \land (\forall x \in X)(\exists y \in B) (x \mathrel{R} y) \}$.
			\item For relational systems $\mathcal{A} = (X, Y, R), \mathcal{B} = (X', Y', S)$, we call a pair $(\varphi, \psi)$ a Galois--Tukey morphism from $\mathcal{A}$ to $\mathcal{B}$ if $\varphi \from X \to X'$, $\psi \from Y' \to Y$ and $(\forall x \in X)(\forall y \in Y')(\varphi(x) \mathrel{S} y \implies x \mathrel{R} \psi(y))$.
		\end{enumerate}
	\end{defi}
	
	\begin{fact}[{{\cite[Theorem 4.9]{blass2010combinatorial}}}]
		If there is a Galois--Tukey morphism $(\varphi, \psi)$ from $\mathcal{A}$ to $\mathcal{B}$, then $\norm{\mathcal{A}} \le \norm{\mathcal{B}}$ and $\norm{\mathcal{B}^\perp} \le \norm{\mathcal{A}^\perp}$. 
	\end{fact}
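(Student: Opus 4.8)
The plan is to prove the first inequality $\norm{\mathcal{A}} \le \norm{\mathcal{B}}$ directly by transporting a witnessing set through $\psi$, and then to deduce the second inequality by observing that the operation $(\cdot)^\perp$ reverses Galois--Tukey morphisms, so that the second inequality is just the first one applied to a morphism between the perpendicular systems.

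For the first inequality, I would fix a witnessing set $B' \subset Y'$ for $\norm{\mathcal{B}}$, so that $\abs{B'} = \norm{\mathcal{B}}$ and every $x' \in X'$ satisfies $x' \mathrel{S} y'$ for some $y' \in B'$. The claim is that the pushforward $\psi[B'] \subset Y$ is a witnessing set for $\norm{\mathcal{A}}$. Indeed, given any $x \in X$, applying the covering property of $B'$ to the point $\varphi(x) \in X'$ produces some $y' \in B'$ with $\varphi(x) \mathrel{S} y'$, and the morphism condition then yields $x \mathrel{R} \psi(y')$. Thus every $x \in X$ is $R$-covered by an element of $\psi[B']$, and since $\abs{\psi[B']} \le \abs{B'} = \norm{\mathcal{B}}$, we conclude $\norm{\mathcal{A}} \le \norm{\mathcal{B}}$.

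For the second inequality I would verify that $(\psi, \varphi)$ is a Galois--Tukey morphism from $\mathcal{B}^\perp = (Y', X', \hat{S})$ to $\mathcal{A}^\perp = (Y, X, \hat{R})$. Here the first-coordinate map is $\psi \from Y' \to Y$ and the second-coordinate map is $\varphi \from X \to X'$, so the condition to check is $\psi(y') \mathrel{\hat{R}} x \implies y' \mathrel{\hat{S}} \varphi(x)$ for all $y' \in Y'$ and $x \in X$. Expanding the definitions, $\psi(y') \mathrel{\hat{R}} x$ means $\neg (x \mathrel{R} \psi(y'))$ and $y' \mathrel{\hat{S}} \varphi(x)$ means $\neg(\varphi(x) \mathrel{S} y')$, so the required implication is exactly the contrapositive of the defining condition $\varphi(x) \mathrel{S} y' \implies x \mathrel{R} \psi(y')$ of the original morphism. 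Having established this, the bound $\norm{\mathcal{B}^\perp} \le \norm{\mathcal{A}^\perp}$ follows by applying the already-proved first inequality to the morphism $(\psi, \varphi) \from \mathcal{B}^\perp \to \mathcal{A}^\perp$.

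I expect no substantive obstacle here, as the entire witness is carried by a single function and no cardinal-arithmetic or choice-theoretic subtlety intervenes. The only point requiring care is the bookkeeping: keeping the domains and codomains of $\varphi$ and $\psi$ straight, and correctly negating both relations when passing to the perpendicular systems so that the reversal of the implication lines up with the reversal of the morphism direction.
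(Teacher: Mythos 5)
Your proof is correct: the paper states this result as a Fact cited from Blass without including a proof, and your argument (pushing a witnessing set through $\psi$ for the first inequality, then observing that $(\psi,\varphi)$ is a Galois--Tukey morphism from $\mathcal{B}^\perp$ to $\mathcal{A}^\perp$ by contraposition for the second) is exactly the standard proof given in the cited source. No gaps; the only implicit convention is that when no covering set exists the norm is treated as undefined or $\infty$, in which case the inequality is vacuous.
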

	
	\begin{defi}
		\begin{enumerate}
			\item For $f, g \in \omega^\omega$, define $f <^* g$ iff $(\forall^\infty n) (f(n) < g(n))$.
			\item For $c \in (\omega+1)^\omega, h \in \omega^\omega$, define $\prod c = \prod_{n \in \omega} c(n)$ and $S(c, h) = \prod_{n \in \omega} [c(n)]^{\le h(n)}$.
			\item For $x \in \prod c$ and $\varphi \in S(c, h)$, define $x \in^* \varphi$ iff $(\forall^\infty n)(x(n) \in \varphi(n))$ and define $x \in^\infty \varphi$ iff $(\exists^\infty n)(x(n) \in \varphi(n))$.
		\end{enumerate}
	\end{defi}
	
	\begin{defi}
		\begin{enumerate}
			\item Define $\D = (\omega^\omega, \omega^\omega, <^*)$, $\frakd = \norm{\D}$ and $\frakb = \norm{\mathbf{D}^\perp}$.
			\item  For a poset $(P, <)$, define a relational system $\Cof(P, <)$ by $\Cof(P, <) = (P, P, <)$. Then we have $\cf(P, <) = \norm{\Cof(P, <)}$.
			\item For $c \in (\omega+1)^\omega, h \in \omega^\omega$, define $\Lc(c, h) = (\prod c, S(c, h), \in^*)$, $\frakc^\forall_{c, h} = \norm{\Lc(c, h)}$ and $\frakv^\forall_{c, h} = \norm{\Lc(c, h)^\perp}$.
			\item Define $\wLc(c, h) = (\prod c, S(c, h), \in^\infty)$, $\frakc^\exists_{c, h} = \norm{\wLc(c, h)}$ and $\frakv^\exists_{c, h} = \norm{\wLc(c, h)^\perp}$.
			\item For an ideal $I$ on $X$, define $\Cov(I) = (X, I, \in)$, $\cov(I) = \norm{\Cov(I)}$ and $\non(I) = \norm{\Cov(I)^\perp}$.
		\end{enumerate}
	\end{defi}
	
	\begin{defi}
		\begin{enumerate}
			\item Define $\frakv^\forall = \min \{ \frakv^\forall_{c, h} : c, h \in \omega^\omega, \lim h = \infty \}$.
			\item Define $\frakc^\exists = \min \{ \frakc^\exists_{c, h} : c, h \in \omega^\omega, \sum h / c < \infty \}$.
		\end{enumerate}
	\end{defi}

	\begin{fact}[{{\cite[Proposition 1.1.5]{garcia2016model}}}]\label{ultrapowershavecontinuumsize}
			Let $\seq{\scrA_i : i \in \omega}$ be a sequence of structures in a language $\LangL$ such that each $\scrA_i$ has size $\le \frakc$.
			Let $U$ be an ultrafilter over $\omega$.
			Then the ultraproduct $\prod_{i \in \omega} \scrA_i / U$ has size either finite or $\frakc$.
	\end{fact}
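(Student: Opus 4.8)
The plan is to prove two matching bounds: that the ultraproduct always has size at most $\frakc$, and that if it is infinite then it has size at least $\frakc$. Since only cardinality is at issue, I work with the underlying sets $A_i$ of the structures $\scrA_i$. For the upper bound, each $A_i$ has size $\le \frakc$, so the full product $\prod_{i\in\omega}A_i$ has size at most $\frakc^{\aleph_0}=(2^{\aleph_0})^{\aleph_0}=\frakc$, and the ultraproduct is a quotient of this product, hence also has size $\le\frakc$.

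Next I would settle the dichotomy and isolate the infinite case. By {\L}o\'s's theorem, for each $n$ the ultraproduct contains at least $n$ distinct elements iff $X_n:=\{i:|A_i|\ge n\}\in U$. If some $X_n\notin U$, then $\{i:|A_i|<n\}\in U$ and by {\L}o\'s the ultraproduct satisfies ``there are fewer than $n$ elements'', so it is finite. Otherwise $X_n\in U$ for every $n$ and the ultraproduct is infinite; it then remains only to exhibit $\frakc$ many distinct elements.

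The heart of the matter---and the step I expect to be the main obstacle---is this lower bound in the infinite case, because individual $A_i$ may be finite, so one cannot simply embed $\omega$ or $2^\omega$ coordinatewise. The idea is to use that $U$ is nonprincipal, hence contains every cofinite set. Put $V_k := \{i:|A_i|\ge 2^k\}\cap[k,\omega)$; then each $V_k\in U$, the sequence $(V_k)_k$ is decreasing, and $\bigcap_k V_k=\emptyset$. For each $i$ set $k(i):=\max\{k:i\in V_k\}$, which is well defined since the $V_k$ decrease to $\emptyset$. Two features matter: first, $k(i)\ge m \iff i\in V_m$, so $\{i:k(i)\ge m\}=V_m\in U$; second, $i\in V_{k(i)}$ gives $|A_i|\ge 2^{k(i)}$, which is exactly enough room to fix an injection $\sigma_i\colon\{0,1\}^{k(i)}\to A_i$.

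Finally, for each $x\in 2^\omega$ I would define $g_x\in\prod_i A_i$ by $g_x(i):=\sigma_i(x\restriction k(i))$, with an arbitrary value at the finitely relevant $i$ where this is undefined. Given $x\ne y$ first differing at position $m$, any $i$ with $k(i)\ge m+1$ satisfies $x\restriction k(i)\ne y\restriction k(i)$, whence $g_x(i)\ne g_y(i)$ by injectivity of $\sigma_i$; since $\{i:k(i)\ge m+1\}=V_{m+1}\in U$, this forces $[g_x]_U\ne [g_y]_U$. Thus $x\mapsto[g_x]_U$ injects $2^\omega$ into the ultraproduct, yielding size $\ge\frakc$ and, with the upper bound, the claimed dichotomy. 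The only bookkeeping I would double-check is that nonprincipality is invoked precisely at the points where the cofinite sets $[k,\omega)$ are intersected into the $V_k$ to guarantee $\bigcap_k V_k=\emptyset$.
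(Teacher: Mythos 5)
Your proof is correct, and since the paper states this as a Fact imported from \cite[Proposition 1.1.5]{garcia2016model} without reproving it, the comparison is with the cited source: your argument --- the upper bound $\frakc^{\aleph_0}=\frakc$ plus quotient map, the {\L}o\'s dichotomy between a finite ultraproduct and all $X_n\in U$, and the injection of $2^\omega$ via maps $\sigma_i\colon\{0,1\}^{k(i)}\to A_i$ along the decreasing $U$-sets $V_k$ --- is precisely the standard proof of that proposition. The only point worth noting is that well-definedness of $k(i)$ for \emph{every} $i$ uses the convention that structures are nonempty (so $V_0=\omega$), which also makes your hedge about coordinates where $g_x(i)$ is ``undefined'' unnecessary.
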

	
	\section{$\SAT(\aleph_1)$ and $\KT(\aleph_1)$}
	
	In this section, we prove that $\SAT(\aleph_1)$ is equivalent to $\CH$ and that $\KT(\aleph_1)$ implies $\frakb = \aleph_1$.
	
	\begin{thm}\label{sataleph1}
		$\SAT(\aleph_1)$ implies CH.
	\end{thm}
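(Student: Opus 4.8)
The plan is to prove the contrapositive: assuming $\neg\CH$, i.e.\ $\aleph_1 < \frakc$, I will show that $\SAT(\aleph_1)$ fails. Since $\SAT(\aleph_1)$ asks for a single ultrafilter $U$ that works simultaneously for all sequences of structures of size $\le \aleph_1$, it suffices to fix an arbitrary (nonprincipal) ultrafilter $U$ on $\omega$ and produce one structure of size $\le \aleph_1$ whose ultrapower by $U$ is not saturated. The witness I would use is the linear order $\scrA = (\omega_1, <)$, taken as the constant sequence $\scrA_i = \scrA$, with ultrapower $M = \scrA^\omega / U$.

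First I would pin down the size and the saturation demand. The order $(\omega_1,<)$ is infinite, so $M$ is infinite, and by Fact \ref{ultrapowershavecontinuumsize} (applicable since $\aleph_1 \le \frakc$) we have $\abs{M} = \frakc$. Consequently, if $M$ were saturated it would be $\frakc$-saturated; as $\aleph_1 < \frakc$, this forces $M$ to realize every type over any parameter set of size $\aleph_1$. So my goal is to find one type over an $\aleph_1$-sized parameter set that $M$ omits.

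Next I would write $c_\alpha \in M$ for the image of the constant sequence with value $\alpha$, for $\alpha < \omega_1$. These are pairwise distinct, so $A = \{\, c_\alpha : \alpha < \omega_1 \,\}$ has size $\aleph_1$, and I consider the type $p(x) = \{\, c_\alpha < x : \alpha < \omega_1 \,\}$ over $A$. This $p$ is finitely satisfiable in $M$: given $\alpha_1, \dots, \alpha_k < \omega_1$, the parameter $c_\beta$ with $\beta = (\max_{i\le k} \alpha_i) + 1$ satisfies every conjunct, since $\{\, n : \alpha_i < \beta \,\} = \omega \in U$ for each $i$. Hence $p$ is a genuine (consistent) type over $A$, and $\frakc$-saturation would require some $[f] \in M$ to realize it.

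The crux is then to see that $p$ is omitted, which I would do by a counting-of-ordinals argument. Any $f \from \omega \to \omega_1$ has countable range, so $\gamma := \sup_{n} f(n) < \omega_1$; then $f(n) \le \gamma$ for every $n$, whence $\{\, n : f(n) \le \gamma \,\} = \omega \in U$ and $[f] \le c_\gamma$ in $M$. In particular the conjunct $c_\gamma < [f]$ of $p$ fails, so $[f]$ does not realize $p$. As $f$ was arbitrary, $M$ omits $p$, contradicting $\frakc$-saturation; thus $M$ is not saturated and $\SAT(\aleph_1)$ fails. The only genuinely delicate point is the bookkeeping around the definition of saturation — namely that $\abs{M}=\frakc>\aleph_1$ upgrades ``saturated'' to the realization of types over the size-$\aleph_1$ set $A$ — together with checking that $p$ is simultaneously consistent (finite satisfiability in $M$) and omitted; everything else is routine.
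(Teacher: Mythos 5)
Your proof is correct and is essentially the same as the paper's: both use the ultrapower of $(\omega_1,<)$, the type $\{\,\alpha_\ast < x : \alpha<\omega_1\,\}$ over the constant-sequence parameters, finite satisfiability via a larger countable ordinal, and the observation that $\sup_n f(n) < \omega_1$ forces any candidate realization $[f]$ to fail. The only difference is the trivial logical repackaging (contrapositive rather than reductio), plus your slightly more explicit bookkeeping that $\abs{M}=\frakc$ makes ``saturated'' mean $\frakc$-saturated, which the paper states more tersely.
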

	\begin{proof}
		Assume $\SAT(\aleph_1)$ and $\neg \CH$.
		Take an ultrafilter $U$ over $\omega$ that witnesses $\SAT(\aleph_1)$.
		Let $\scrA_\ast = (\omega_1, <)^\omega / U$.
		For $\alpha < \omega_1$, put $\alpha_\ast = [\seq{\alpha, \alpha, \alpha, \dots}]$.
		Define a set $p$ of formulas with a free variable $x$ by
		\[
		p = \{ \quot{\alpha_\ast < x} : \alpha < \omega_1 \}.
		\]
		This $p$ is finitely satisfiable and the number of parameters occuring in $p$ is $\aleph_1 < \frakc = \abs{\scrA_\ast}$ by $\neg \CH$.
		Thus, by $\SAT(\aleph_1)$, we can take $f \from \omega \to \omega_1$ such that $[f]$ realizes $p$.
		Put $\beta = \sup_{n \in \omega } f(n)$.
		Now we have $\{ n \in \omega : \beta <  f(n) \} \in U$ and this contradicts the definition of $\beta$.
	\end{proof}
	
	\begin{defi}
		Let $\mcf = \min \{ \cf(\omega^\omega/U) : U \text{ an ultrafilter over } \omega \}$.
	\end{defi}
	
	\begin{lem}[{{\cite[Claim 2.2]{golshani2021keislershelah}}}]\label{cofinalityomega1}
		Let $\scrA$ be a structure in a language $\LangL = \{<\}$.
		Suppose that $a \in \scrA$ has cofinality $\omega_1$.
		Let $U$ be an ultrafilter over $\omega$.
		Then $a_\ast = [\seq{a, a, a, \dots}]$ has cofinality $\omega_1$ in $\scrA^\omega / U$.
	\end{lem}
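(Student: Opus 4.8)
The plan is to verify the two inequalities $\cf(a_\ast) \le \omega_1$ and $\cf(a_\ast) \ge \omega_1$ separately, where by the cofinality of an element we mean the cofinality of its set of $<$-predecessors; I treat $<$ as a linear order (as in the intended application), so transitivity is available, and I write $b_\ast = [\seq{b,b,b,\dots}]$ for the class of the constant sequence at $b$. Fix a strictly increasing sequence $\seq{a_\xi : \xi < \omega_1}$ below $a$ that is cofinal in $\{x \in \scrA : x < a\}$, witnessing that $a$ has cofinality $\omega_1$.

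For the upper bound I would show that $\seq{(a_\xi)_\ast : \xi < \omega_1}$ is cofinal below $a_\ast$. Each $(a_\xi)_\ast < a_\ast$ since $a_\xi < a$ on all of $\omega$. Now take any $[f] < a_\ast$, so that the set $A = \{ n : f(n) < a \}$ lies in $U$. For every $n \in A$ choose $\xi_n < \omega_1$ with $f(n) < a_{\xi_n}$. The crux is that $\{\xi_n : n \in A\}$ is a countable subset of $\omega_1$, hence bounded: as $\omega_1$ is regular, $\xi^\ast = \sup_{n \in A} \xi_n < \omega_1$. By transitivity and monotonicity of $\seq{a_\xi}$ we get $f(n) < a_{\xi^\ast}$ for all $n \in A$, whence $\{ n : f(n) < a_{\xi^\ast} \} \supseteq A \in U$ and therefore $[f] < (a_{\xi^\ast})_\ast$. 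This gives $\cf(a_\ast) \le \omega_1$.

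For the lower bound I argue by contradiction, assuming $a_\ast$ admits a countable cofinal family $\seq{[f_k] : k \in \omega}$ of predecessors. Consider the countable set $C = \{ f_k(n) : k, n \in \omega \} \cap \{ x \in \scrA : x < a \}$. I claim $C$ is cofinal below $a$. Given any $x_0 < a$, the constant $(x_0)_\ast$ lies below $a_\ast$, so by cofinality there is $k$ with $(x_0)_\ast \le [f_k]$, i.e.\ $\{ n : x_0 \le f_k(n) \} \in U$. Intersecting this with the $U$-large set $\{ n : f_k(n) < a \}$ produces some $n$ with $x_0 \le f_k(n) < a$, so $f_k(n) \in C$ witnesses cofinality at $x_0$. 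Thus $C$ is a countable cofinal subset of $\{x : x < a\}$, contradicting $\cf(a) = \omega_1$. Hence $\cf(a_\ast)$ is uncountable, and with the upper bound it equals $\omega_1$.

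The step demanding the most care is the boundedness claim $\sup_{n \in A} \xi_n < \omega_1$ in the upper bound: this is precisely where the regularity of $\omega_1$ is used, and it is the reason a constant sequence cannot ``diagonalize'' across the countably many coordinates to escape the $\omega_1$-chain, so that the resulting cofinality is again $\omega_1$ rather than something larger. The lower bound is more routine, amounting to extracting a countable cofinal set directly from the countable family of witnessing functions.
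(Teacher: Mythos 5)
Your proof is correct and follows essentially the same route as the paper: lift the increasing cofinal $\omega_1$-sequence below $a$ to constant sequences in the ultrapower and use the regularity of $\omega_1$ to show this lifted sequence remains cofinal below $a_\ast$, which is exactly the step the paper summarizes as ``shown by regularity of $\omega_1$.'' Your additional explicit lower-bound argument (no countable family of predecessors of $a_\ast$ can be cofinal) is a routine verification that the paper leaves implicit, since an increasing cofinal sequence of order type $\omega_1$ already pins the cofinality down to exactly $\omega_1$.
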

	\begin{proof}
		Take an increasing cofinal sequence $\seq{x_\alpha : \alpha < \omega_1}$ of points in $\scrA$ below $a$.
		Then $\seq{x_\alpha^* : \alpha < \omega_1}$ is an increasing cofinal sequence in $\scrA_\ast$, where $x_\alpha^* = [\seq{x_\alpha, x_\alpha, x_\alpha, \dots}]$ for each $\alpha < \omega_1$.
		This can be shown by regularity of $\omega_1$.
	\end{proof}
	
	\begin{lem}[{{\cite[Claim 2.4]{golshani2021keislershelah}}}]\label{qhomogeneous}
		Let $U$ be an ultrafilter over $\omega$ and $\scrB_\ast = (\Q, <)^\omega/U$.
		Then for every $a, b \in \scrB_\ast$, there is an automorphism on $\scrB_\ast$ that sends $a$ to $b$.
	\end{lem}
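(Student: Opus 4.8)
The guiding idea is that $\scrB_\ast$ is an ultrapower of the dense linear order $(\Q, <)$, so by {\L}o\'s's theorem it is again a dense linear order without endpoints, and the automorphisms of the base structure lift coordinate-wise to the ultrapower. Concretely, $(\Q, <)$ is homogeneous in the strongest possible sense: for any two rationals $p, q$ the translation $x \mapsto x + (q - p)$ is an order-automorphism of $\Q$ sending $p$ to $q$. The plan is to choose such a translation in each coordinate and show that the resulting coordinate-wise map descends to a well-defined automorphism of the quotient $\scrB_\ast$.

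In detail, write $a = [\seq{a_n : n \in \omega}]$ and $b = [\seq{b_n : n \in \omega}]$ for fixed representatives. For each $n$ let $\sigma_n \from \Q \to \Q$ be the translation $\sigma_n(x) = x + (b_n - a_n)$, which is an order-isomorphism of $(\Q, <)$ with $\sigma_n(a_n) = b_n$ and inverse $\sigma_n^{-1}(y) = y - (b_n - a_n)$. Define $\sigma \from \scrB_\ast \to \scrB_\ast$ by $\sigma([\seq{x_n : n \in \omega}]) = [\seq{\sigma_n(x_n) : n \in \omega}]$. Then $\sigma(a) = [\seq{\sigma_n(a_n)}] = [\seq{b_n}] = b$ by construction, so once $\sigma$ is shown to be an automorphism we are done.

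The verifications are all {\L}o\'s-style inclusions of index sets. Well-definedness holds because if $[\seq{x_n}] = [\seq{y_n}]$, then $\{ n : x_n = y_n \} \subseteq \{ n : \sigma_n(x_n) = \sigma_n(y_n) \}$, and the former lies in $U$; order-preservation holds because each $\sigma_n$ is order-preserving, so $\{ n : x_n < y_n \} \subseteq \{ n : \sigma_n(x_n) < \sigma_n(y_n) \}$. Strict order-preservation on a linear order gives injectivity, and surjectivity follows by running the same construction with the inverse translations $\sigma_n^{-1}$, which yields a coordinate-wise two-sided inverse to $\sigma$. Hence $\sigma$ is an order-automorphism of $\scrB_\ast$ carrying $a$ to $b$.

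There is no serious obstacle here; the only point needing care is the routine check that applying a family of order-isomorphisms coordinate-wise commutes with passage to the $U$-quotient, i.e.\ that $\sigma$ respects the defining relation $<$ of the ultrapower structure and does not depend on the chosen representatives. This is exactly the pair of index-set inclusions above, and it works for any nonprincipal $U$, so the lemma holds with no appeal to saturation or to $\CH$.
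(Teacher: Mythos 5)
Your proof is correct, and the automorphism you build is literally the same one the paper uses: coordinate-wise translation by $b-a$, i.e.\ $x \mapsto x - a + b$. The difference is purely in how the verification is organized. The paper avoids all of your hand-checks by a single trick: it expands the base structure to $(\Q, <, F)$ where $F(x,y,z) = x-y+z$, observes that ``for all $y,z$, the map $x \mapsto F(x,y,z)$ is an automorphism of $(\Q,<)$ sending $y$ to $z$'' is expressible by a first-order sentence in the language $\{<,F\}$, and then applies {\L}o\'s's theorem once to conclude the same sentence holds of the induced map $F_\ast$ on $\scrB_\ast = (\Q,<)^\omega/U$; well-definedness, order-preservation in both directions, and bijectivity all fall out of the transfer without separate arguments. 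Your route instead verifies these properties directly from inclusions of index sets, which is more elementary and makes explicit why the coordinate-wise construction descends to the quotient, at the cost of several routine checks (and note your separate injectivity step is subsumed by your two-sided inverse). The paper's method has the further advantage that it lifts any uniformly first-order-definable family of automorphisms of the base structure, not just translations of $\Q$, which is the reusable content of the lemma.
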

	\begin{proof}
		Consider the map $F \colon \Q^3 \to \Q$ defined by $F(x, y, z) = x - y + z$.
		Then we have
		\[
		(\forall y, z \in \Q) (\text{the map } x \mapsto F(x, y, z) \text{ is an automorphism on } (\Q, <) \text{ that sends $y$ to $z$}).
		\]
		This statement can be written by a first-order formula in the language $\LangL' = \{<, F\}$.
		Thus the same statement is true in $(\Q, <, F)^\omega/U$. The map $F_\ast : \scrB_\ast^3 \to \scrB_\ast$ induced by $F$ satisfies that
		\[
		(\forall 	y, z \in \scrB_\ast) (\text{the map } x \mapsto F(x, y, z) \text{ is an automorphism on } (\scrB_\ast, <) \text{ that sends $y$ to $z$}). \qedhere
		\]
	\end{proof}
	
	\begin{thm}\label{ktaleph1b}
		$\KT(\aleph_1)$ implies $\mcf = \aleph_1$.
	\end{thm}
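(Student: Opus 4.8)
The plan is to use $\KT(\aleph_1)$ to produce a single ultrafilter $U$ with $\cf(\omega^\omega/U) = \aleph_1$. Since every ultrafilter satisfies $\cf(\omega^\omega/U) \ge \aleph_1$ (given countably many $[f_n]$, the function $g(k) = 1 + \max_{n \le k} f_n(k)$ lies strictly above every $[f_n]$, so no countable set is cofinal), this yields $\mcf = \aleph_1$. To invoke $\KT(\aleph_1)$ I work in the language $\{<\}$ and feed it two elementarily equivalent dense linear orders without endpoints; since the theory of dense linear orders without endpoints is complete, any two such orders are elementarily equivalent, so I am free to pick them. Let $\scrB = (\Q, <)$, and let $\scrA$ be a dense linear order without endpoints of size $\aleph_1$ carrying a point $a$ of cofinality $\omega_1$ from below. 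Such an $\scrA$ exists: concatenate $\omega_1$-many copies of $\Q$, then a single point $a$, then one further copy of $\Q$. The blocks below $a$ make $a$ a limit from the left of cofinality exactly $\omega_1$ (any countable set below $a$ meets only countably many blocks, hence is bounded below $a$ by the regularity of $\omega_1$), while density and the absence of endpoints are immediate.

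Applying $\KT(\aleph_1)$ to $\scrA \equiv \scrB$ produces an ultrafilter $U$ together with an isomorphism $\Phi \from \scrA^\omega/U \to (\Q,<)^\omega/U$. By Lemma \ref{cofinalityomega1} the point $a_\ast$ has cofinality $\omega_1$ in $\scrA^\omega/U$, and since an order isomorphism carries initial segments onto initial segments, the image $b = \Phi(a_\ast)$ has cofinality $\omega_1$ from below in $(\Q,<)^\omega/U$.

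The heart of the argument is then to identify the cofinality-from-below of a point of $(\Q,<)^\omega/U$ with $\cf(\omega^\omega/U)$. By Lemma \ref{qhomogeneous} every point of $(\Q,<)^\omega/U$ has the same cofinality from below, so it suffices to compute it at $0_\ast = [\seq{0,0,\dots}]$. I claim that $g \mapsto [\seq{-1/(g(n)+1) : n \in \omega}]$ is an order-embedding of $\omega^\omega/U$ onto a subset of the initial segment below $0_\ast$ that is cofinal there: it is order preserving because $-1/(g(n)+1) < -1/(g'(n)+1)$ on a $U$-set iff $g(n) < g'(n)$ on a $U$-set, and it is cofinal because any $[f] < 0_\ast$ (so $f(n) < 0$ on a $U$-set) is dominated once $g(n)$ is chosen large enough that $1/(g(n)+1) < \abs{f(n)}$ on that set. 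Hence the cofinality of $0_\ast$ from below equals $\cf(\omega^\omega/U)$, and therefore so does that of $b$.

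Combining the two computations of the cofinality of $b$ gives $\cf(\omega^\omega/U) = \omega_1 = \aleph_1$, as required. I expect the main obstacle to be this last paragraph: recognizing that the homogeneity of $(\Q,<)^\omega/U$ lets one read off $\cf(\omega^\omega/U)$ as the cofinality below an arbitrary point, and verifying that the rational scale $-1/(g(n)+1)$ realizes an order-isomorphic cofinal copy of $\omega^\omega/U$ inside that initial segment. The subsidiary technical point is to build $\scrA$ with a genuine point of cofinality $\omega_1$, rather than one inadvertently reduced to cofinality $\omega$ by rationals lying immediately beneath it.
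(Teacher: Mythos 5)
Your proof is correct and is essentially the paper's argument read in the contrapositive direction: the paper assumes $\mcf \ge \aleph_2$ and shows that the ultrapowers of $(\Q,<)$ and of a DLO with a point of cofinality $\omega_1$ can never be isomorphic, using exactly the same ingredients you use --- Lemmas \ref{cofinalityomega1} and \ref{qhomogeneous} together with a rational scale $f \mapsto 1/(f(n)+1)$ relating $\cf(\omega^\omega/U)$ to the cofinality below a point of $(\Q,<)^\omega/U$, phrased there as a Galois--Tukey morphism. The only cosmetic differences are that you apply the isomorphism supplied by $\KT(\aleph_1)$ directly to compute $\cf(\omega^\omega/U) = \aleph_1$ for that particular $U$, and that you establish exact equality of the two cofinalities where the paper needs only one inequality.
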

	
	\begin{proof}
		This proof is based on \cite[Theorem 2.1]{golshani2021keislershelah}.
		Assume that $\mcf \ge \aleph_2$. We shall show $\neg \KT(\aleph_1)$.
		
		Let $\LangL = \{ < \}$, $\scrA = (\Q, <)$ and $\scrB = (\Q + ((\omega_1+1) \times \Q_{\ge 0}), <_\scrB)$. Here $<_\scrB$ is defined by a lexicographical order and a disjoint union order.
		$\scrA$ and $\scrB$ are dense linear ordered sets, so by completeness of DLO, we have $\scrA \equiv \scrB$.
		Take an ultrafilter $U$ over $\omega$.
		Put $\scrA_* = \scrA^\omega / U, \scrB_* = \scrB^\omega / U$.
		
		There is a point $a$ in $\scrB$ such that $\cf(\scrB_a) = \aleph_1$, where $\scrB_a = \{ x \in \scrB : x < a \}$.
		Then $a_* \in \scrB_*$ has cofinality $\aleph_1$ by Lemma \ref{cofinalityomega1}.
		Here $a_* = [\seq{a, a, a, \dots}]$.
		On the other hand, we shall show every point in $\scrA_*$ has cofinality $\ge \mcf$.
		If we do this, since we assumed $\mcf \ge \aleph_2$, we will have $\scrA_* \not \simeq \scrB_*$.
		
		By Lemma \ref{qhomogeneous}, it suffices to consider the point $0_* = [\seq{0, 0, 0, \dots}]$.
		Since $\Q$ is symmetrical, we consider $\cf((\Q_{>0})^\omega/U, >_U)$.
		
		Now we construct a Galois--Tukey morphism $(\varphi, \psi) \colon \Cof(\omega^\omega/U) \to \Cof((\Q_{>0})^\omega/U, >_U)$ by 
		\begin{align*}
			\varphi &\colon \omega^\omega/U \to (\Q_{>0})^\omega/U; [f] \mapsto [\seq{1/(f(n)+1) : n \in \omega}], \\
			\psi &\colon (\Q_{>0})^\omega/U \to \omega^\omega/U; [g] \mapsto [\seq{\floor{1/g(n)-1} : n \in \omega}].
		\end{align*}
		So we have $\cf((\Q_{>0})^\omega/U, >_U) \ge \cf(\omega^\omega/U, <_U)$.
		
		Thus we have $\cf((\Q_{>0})^\omega/U, >_U) \ge \mcf$. We are done.
	\end{proof}
	
	\begin{cor}
		$\KT(\aleph_1)$ implies $\frakb = \aleph_1$.
	\end{cor}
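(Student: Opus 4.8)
The plan is to read this off immediately from Theorem~\ref{ktaleph1b} together with the $\ZFC$ inequality $\frakb \le \mcf$. Theorem~\ref{ktaleph1b} gives $\mcf = \aleph_1$ under $\KT(\aleph_1)$, and $\aleph_1 \le \frakb$ holds outright, so once I establish $\frakb \le \mcf$ I will have $\aleph_1 \le \frakb \le \mcf = \aleph_1$, forcing $\frakb = \aleph_1$. Thus the only content is the inequality $\frakb \le \mcf$, which I would obtain by proving $\frakb \le \cf(\omega^\omega/U)$ for an arbitrary ultrafilter $U$ and then taking the minimum over all $U$.

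For a fixed $U$, I would exhibit a Galois--Tukey morphism $(\varphi, \psi)$ from $\D^\perp$ to $\Cof(\omega^\omega/U)$ and invoke the Fact, which then yields $\norm{\D^\perp} \le \norm{\Cof(\omega^\omega/U)}$, that is, $\frakb \le \cf(\omega^\omega/U)$. Recall that in $\D^\perp = (\omega^\omega, \omega^\omega, \widehat{<^*})$ the relation connects $f$ (first coordinate) to $w$ (second coordinate) exactly when $\neg(w <^* f)$, whereas $\Cof(\omega^\omega/U) = (\omega^\omega/U, \omega^\omega/U, <_U)$. I would set
\[
\varphi \colon \omega^\omega \to \omega^\omega/U,\ f \mapsto [f], \qquad \psi \colon \omega^\omega/U \to \omega^\omega,\ [h] \mapsto (\text{a fixed representative of } [h]).
\]

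The morphism condition to verify is that $[f] <_U [h]$ implies $\neg(\psi([h]) <^* f)$, for all $f \in \omega^\omega$ and all $[h] \in \omega^\omega/U$. This is immediate: $[f] <_U [h]$ means $\{ n : f(n) < h(n) \} \in U$, hence this set is infinite, so $\psi([h])(n) > f(n)$ for infinitely many $n$, and consequently $\psi([h]) <^* f$ fails. I expect the only real care to lie in the bookkeeping --- matching the perp on the bounding side of $\D$ and checking that the Fact delivers the inequality in the direction $\frakb \le \cf(\omega^\omega/U)$ rather than its reverse; the combinatorial core, namely that an eventually dominated function cannot be $<_U$-above, is entirely routine.
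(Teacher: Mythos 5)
Your proof is correct and takes essentially the same route as the paper: the corollary is read off from Theorem~\ref{ktaleph1b} together with the inequality $\frakb \le \mcf$ (plus the standard fact $\aleph_1 \le \frakb$), which is exactly the paper's one-line argument. The paper cites $\frakb \le \mcf$ without proof, whereas you verify it via a Galois--Tukey morphism from $\D^\perp$ to $\Cof(\omega^\omega/U)$; that verification is correct and mirrors the morphism technique the paper itself uses inside the proof of Theorem~\ref{ktaleph1b}.
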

	\begin{proof}
		This follows from Theorem \ref{ktaleph1b} and the fact that $\frakb \le \mcf$.
	\end{proof}
	
	\begin{rmk}
		The combination of Theorem \ref{ktaleph1b} and \cite[Theorem 3.2]{golshani2021keislershelah} gives the implication $(\cov(\meager) = \frakc \land \cf(\frakc) = \aleph_1) \implies \frakb = \aleph_1$.
		This is immediate from the facts that $\cov(\meager) \le \frakd$ and $\frakb \le \cf(\frakd)$.
	\end{rmk}
	
	\section{$\SAT(\aleph_0)$ and $\KT(\aleph_0)$}
	
	In this section, we first briefly mention consistency of $\KT(\aleph_0) + \neg \KT(\aleph_1)$. And we prove that $\SAT(\aleph_0)$ is equivalent to $\cov(\meager) = \frakc \land 2^{<\frakc} = \frakc$.
	
	\begin{thm}\label{covmktaleph0}
		$\cov(\meager) = \frakc$ implies $\KT(\aleph_0)$.
	\end{thm}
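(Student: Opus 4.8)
The plan is to build the required ultrafilter $U$ and an isomorphism $\pi \from \scrA^\omega/U \to \scrB^\omega/U$ simultaneously, by a transfinite recursion of length $\frakc$. Since $\scrA$ and $\scrB$ are countably infinite (the finite case being trivial, as a finite structure's ultrapower is itself), both ultrapowers have size $\frakc$ by Fact \ref{ultrapowershavecontinuumsize}, so I fix enumerations $\seq{f_\xi : \xi < \frakc}$ of $\scrA^\omega$ and $\seq{g_\xi : \xi < \frakc}$ of $\scrB^\omega$, and an enumeration $\seq{S_\xi : \xi < \frakc}$ of $\Pow(\omega)$. Along the recursion I maintain an increasing chain of proper filters $\seq{F_\xi : \xi < \frakc}$ on $\omega$, each generated by fewer than $\frakc$ sets and containing the Fr\'echet filter, together with a partial map $\pi_\xi$ whose domain and range have size $< \frakc$. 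The key invariant is that $\pi_\xi$ is a partial elementary map modulo $F_\xi$: for every first-order formula $\varphi$ and every tuple $[\vec f]$ in $\dom \pi_\xi$ with image $[\vec g]$, the filter $F_\xi$ decides $\{ n : \scrA \models \varphi(\vec f(n)) \}$ and $\{ n : \scrB \models \varphi(\vec g(n)) \}$ the same way. At the end I set $U = \bigcup_\xi F_\xi$ and $\pi = \bigcup_\xi \pi_\xi$; the bookkeeping through the $S_\xi$ guarantees $U$ is an ultrafilter, the back-and-forth through the $f_\xi, g_\xi$ guarantees $\pi$ is a bijection, and the invariant makes it an isomorphism by {\L}o\'s's theorem.

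The heart of the argument is the successor step, where I add a prescribed element, say $[f_\xi]$, to the domain. I must produce an image $g \in \scrB^\omega$ and a filter extension $F_{\xi+1} \supseteq F_\xi$ preserving the invariant: for every formula $\varphi$ and every tuple of parameters already handled, the truth value of $\varphi(f_\xi, \vec f)$ on the $\scrA$-side and of $\varphi(g, \vec g)$ on the $\scrB$-side must be forced to agree modulo $F_{\xi+1}$. Because $\scrA \equiv \scrB$, for any single admissible choice of these truth values there are, coordinatewise, many admissible values $g(n)$; the real difficulty is to choose one function $g$ that works simultaneously against all of the fewer-than-$\frakc$ constraints coming from the already-enumerated parameters.

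This is exactly where $\cov(\meager) = \frakc$ enters. I reformulate the successor step as a genericity requirement in the Polish space $\scrB^\omega \cong \omega^\omega$: for each formula $\varphi$ and each relevant parameter tuple, the set of functions $g$ that fail to force the corresponding side-condition into the filter is meager. There are fewer than $\frakc$ such constraints at stage $\xi$, so they contribute fewer than $\frakc$ meager sets; since $\cov(\meager) = \frakc$, these cannot cover $\scrB^\omega$, and any $g$ lying outside their union serves as a legitimate image. The finitely many decisions this $g$ forces are added to obtain $F_{\xi+1}$, which stays proper by construction. The ``forth'' direction of adding a prescribed $[g_\xi]$ to the range is symmetric, since the roles of $\scrA$ and $\scrB$ are interchangeable, and the ultrafilter steps simply put $S_\xi$ or its complement into the filter, whichever retains the finite intersection property with $F_\xi$.

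The main obstacle is precisely the successor step. One must verify that the ``bad'' sets of functions are genuinely meager and that a generic $g$ can be absorbed without ever breaking properness of the filter. Concretely, the delicate points are (i) that realizing a full first-order type, not merely the atomic diagram, reduces to avoiding meager sets, which requires using $\scrA \equiv \scrB$ at each coordinate to see that the relevant conditions are satisfiable densely often; and (ii) that the filter commitments accumulated across the $< \frakc$ earlier stages, together with the new ones from $g$, retain the finite intersection property. Maintaining properness of the filter as a \emph{global} invariant, rather than merely locally at each step, is the technical core; once it is secured, the back-and-forth bookkeeping and the final {\L}o\'s-theorem verification are routine.
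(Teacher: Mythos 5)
Your proposal is correct and follows essentially the same route as the paper, whose proof of this theorem is simply an appeal to Golshani--Shelah's argument: a length-$\frakc$ back-and-forth construction of the ultrafilter, maintaining the truth-value agreement sets in a filter generated by $<\frakc$ sets and using $\cov(\meager)=\frakc$ (in the guise of Martin's axiom for countable posets, i.e.\ avoidance of $<\frakc$ meager sets) to find each new matching function generically; the same machinery is carried out in detail in the paper's proof of Theorem \ref{thm:ktaleph1forcing}. One small repair: the comeager (dense open) conditions should be indexed by \emph{finite sets} of formulas-with-parameters together with a filter generator, not by single formulas, since agreement sets are not closed under intersection---this is precisely the finite-intersection-property bookkeeping you already flag as the technical core, and its density verification uses the inductive invariant applied to existential formulas together with the choice of a truth-value pattern whose preimage is filter-positive.
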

	\begin{proof}
		By the same proof as \cite[Theorem 3.2]{golshani2021keislershelah}.
	\end{proof}
	
	\begin{cor}
		Assume $\Con(\ZFC)$. Then $\Con(\ZFC + \KT(\aleph_0) + \neg \KT(\aleph_1))$.
	\end{cor}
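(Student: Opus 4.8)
The plan is to realize $\KT(\aleph_0)$ and $\neg\KT(\aleph_1)$ simultaneously in a single model, and the natural candidate is a model of Martin's Axiom together with $\neg\CH$. By the Solovay--Tennenbaum theorem, $\Con(\ZFC)$ implies $\Con(\ZFC + \mathrm{MA} + \neg\CH)$, so it suffices to verify that $\mathrm{MA} + \neg\CH$ already implies $\KT(\aleph_0) \land \neg\KT(\aleph_1)$.

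First I would recall the standard values of cardinal invariants under $\mathrm{MA}$. Full $\mathrm{MA}$ forces every invariant lying strictly between $\aleph_1$ and $\frakc$ in the usual diagrams up to $\frakc$; in particular $\cov(\meager) = \frakc$, and also $\mathfrak{b} = \frakc$, the latter via $\mathfrak{p} \le \frakb$ together with $\mathrm{MA} \Rightarrow \mathfrak{p} = \frakc$. Since $\neg\CH$ gives $\frakc \ge \aleph_2$, we obtain $\frakb \ge \aleph_2$, so in particular $\frakb \neq \aleph_1$.

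Next I would invoke the two results already in hand. From $\cov(\meager) = \frakc$ and Theorem \ref{covmktaleph0} we get $\KT(\aleph_0)$. From $\frakb \neq \aleph_1$ and the contrapositive of the Corollary to Theorem \ref{ktaleph1b} (which asserts $\KT(\aleph_1) \Rightarrow \frakb = \aleph_1$) we get $\neg\KT(\aleph_1)$. Combining the two yields a model of $\KT(\aleph_0) + \neg\KT(\aleph_1)$, which gives the desired relative consistency.

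There is no genuine obstacle beyond assembling the pieces: the only content, once the consistency of $\mathrm{MA} + \neg\CH$ is cited, is the routine bookkeeping of which cardinal characteristics $\mathrm{MA}$ pins down, and the substantive work has been packaged into Theorems \ref{covmktaleph0} and \ref{ktaleph1b}. If one wishes to avoid full $\mathrm{MA}$, the argument goes through verbatim in any model satisfying both $\cov(\meager) = \frakc$ and $\frakb > \aleph_1$; alternatively $\mathrm{MA}(\sigma\text{-centered}) + \neg\CH$ suffices, using Ellentuck--Rucker to pass directly from $\mathrm{MA}(\sigma\text{-centered})$ to $\SAT(\aleph_0)$ and hence $\KT(\aleph_0)$, while $\mathfrak{p} = \frakc$ again forces $\frakb = \frakc > \aleph_1$.
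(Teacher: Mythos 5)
Your proposal is correct and matches the paper's own argument: the paper likewise takes a model of $\mathrm{MA} + \neg\mathrm{CH}$ and combines Theorem \ref{covmktaleph0} (via $\cov(\meager) = \frakc$) with Theorem \ref{ktaleph1b} and its corollary (via $\frakb > \aleph_1$) to get $\KT(\aleph_0) \land \neg\KT(\aleph_1)$. You simply spell out the cardinal-invariant bookkeeping that the paper leaves implicit.
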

	\begin{proof}
		$\mathrm{MA} + \neg \mathrm{CH}$ implies $\KT(\aleph_0) \land \neg \KT(\aleph_1)$ by Theorem \ref{ktaleph1b} and \ref{covmktaleph0}.
	\end{proof}
	
	\begin{fact}[{{\cite[Lemma 2.4.2]{bartoszynski1995set}}}]
		$\cov(\meager) = \mathfrak{v}^\exists_{\seq{\omega : n \in \omega}, \mathrm{id}}$.
		In other words, $\cov(\meager) \ge \kappa$ holds iff $(\forall X \subset \omega^\omega \text{ of size } <\kappa) (\exists S \in \prod_{i \in \omega} [\omega]^{\le i}) (\forall x \in X) (\exists^\infty n) (x(n) \in S(n))$ holds.
	\end{fact}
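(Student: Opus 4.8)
The plan is to run everything through Bartoszyński's combinatorial description of the meager ideal and then match it against $\wLc(\seq{\omega:n\in\omega},\id)^\perp$ with a pair of Galois--Tukey morphisms, so that the asserted equality (and, dually, $\non(\meager)=\frakc^\exists_{\seq{\omega:n\in\omega},\id}$) drops out of the Fact relating morphisms to norms. The relevant characterization of meagerness is: $M$ is meager iff there are a partition $\seq{I_k:k\in\omega}$ of $\omega$ into consecutive finite intervals and a $y$ with $M\subseteq\{x:(\forall^\infty k)(x\restriction I_k\ne y\restriction I_k)\}$. Writing $x\mathrel R(y,\seq{I_k})$ for this eventual-mismatch relation and $\mathrm{CR}$ for the set of such chopped reals, this says that $\Cov(\meager)$ and $(2^\omega,\mathrm{CR},R)$ are Galois--Tukey equivalent (it is harmless to work in $2^\omega$, since $\cov(\meager)$ does not depend on the Polish space); hence $\cov(\meager)$ is the least number of chopped reals eventually mismatching every real, and $\non(\meager)$ the least number of reals infinitely often matching every chopped real.

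It then suffices to exhibit morphisms $(2^\omega,\mathrm{CR},R)\to\wLc(\seq{\omega:n\in\omega},\id)^\perp$ and back. The first is the easy half, and in fact uses only width $1$: fix a partition $\seq{I_k}$ with $\abs{I_k}\to\infty$, send a real $x$ to the singleton slalom $k\mapsto\{e_k(x)\}$, where $e_k$ codes $x\restriction I_k$ by a natural number, and send a ``test'' real $w$ to the chopped real $(y_w,\seq{I_k})$ obtained by decoding $w(k)$ into $y_w\restriction I_k$. Then $w$ eventually escapes the slalom precisely when $x$ eventually mismatches $(y_w,\seq{I_k})$, and the Fact yields $\cov(\meager)\le\frakv^\exists_{\seq{\omega:n\in\omega},\id}$ together with $\frakc^\exists_{\seq{\omega:n\in\omega},\id}\le\non(\meager)$. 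For the reverse morphism I would, given a width-$n$ slalom $T$, read off a single real $\varphi(T)$ whose block values are chosen to agree with whatever $T$ forces, and given a chopped real $c$ build a real $\psi(c)$ spread over the blocks so that catching $\psi(c)$ infinitely often compels $\varphi(T)$ to match $c$ infinitely often; this yields $\frakv^\exists_{\seq{\omega:n\in\omega},\id}\le\cov(\meager)$ and $\non(\meager)\le\frakc^\exists_{\seq{\omega:n\in\omega},\id}$, closing all four inequalities.

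The main obstacle is precisely this reverse morphism, where the prescribed width bound $\abs{S(n)}\le n$ must be reconciled with the block structure. The difficulty is that a width-$n$ slalom carries, over an interval of coordinates, far more ``threads'' than a single coordinate of the output real can encode, so one cannot naively turn the multivalued slalom into single-valued block data; the estimate $\abs{T(n)}\le n$ enters a delicate counting, and the slack that saves the construction is that matching and catching are required only infinitely often (so it is enough to control a positive proportion of blocks rather than almost all of them), together with the fact that here $c(n)=\omega$ imposes no per-coordinate ceiling, so that the side condition $\sum_n h(n)/c(n)<\infty$ holds vacuously. Once this bookkeeping is arranged --- choosing the interval lengths so that the growth of the width $n$ absorbs the growing number of admissible block patterns --- verifying the two morphism conditions is routine, and Bartoszyński's characterization supplies the remaining Galois--Tukey equivalence between $\Cov(\meager)$ and $(2^\omega,\mathrm{CR},R)$.
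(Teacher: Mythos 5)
The paper gives no proof of this Fact at all: it is quoted directly from Bartoszy\'nski--Judah, so your attempt has to be measured against that theorem itself rather than against any argument in the text. Your Galois--Tukey bookkeeping is set up correctly, and the first half of your plan is essentially sound: granting Bartoszy\'nski's interval characterization of meager sets, your width-one morphism does yield $\cov(\meager)\le\frakv^\exists_{\seq{\omega : n \in \omega},\id}$ and $\frakc^\exists_{\seq{\omega : n \in \omega},\id}\le\non(\meager)$. (Even here there is a small slip: over $2^\omega$ the block-coding maps $e_k$ are injections into $\omega$, not bijections, so a test value $w(k)$ outside the range of $e_k$ decodes to a default block which $x$ may accidentally match even though $w$ escapes the slalom at $k$; the clean fix is to run this half in $\omega^\omega$, where $\cov(\meager)$ is unchanged and blocks correspond to integers bijectively.)

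The genuine gap is the reverse morphism, that is, the inequality $\frakv^\exists_{\seq{\omega : n \in \omega},\id}\le\cov(\meager)$ (equivalently $\non(\meager)\le\frakc^\exists_{\seq{\omega : n \in \omega},\id}$). This is not bookkeeping; it is the entire content of the cited lemma. You correctly isolate the obstruction --- a width-$n$ slalom presents up to $n$ mutually conflicting block predictions at each coordinate, and the single real $\varphi(T)$ can follow only one of them at each place, so an adversarial slalom can always pad its sets with junk predictions that a deterministic $\varphi$ follows instead of the genuine ones --- but you then assert the obstruction away, and the two mechanisms you invoke do not dissolve it. ``Choosing the interval lengths so that the growth of the width $n$ absorbs the growing number of admissible block patterns'' points the wrong way: over alphabet $\omega$ each block admits infinitely many patterns, and over $2^\omega$ it admits $2^{\abs{I_k}}$ of them, which linear width never absorbs; and in the regime where the width \emph{does} exceed the number of patterns, the slalom's predictions become vacuous rather than useful, since $T(k)$ can then contain every candidate block and pins down nothing. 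The side condition $\sum_n h(n)/c(n)<\infty$ is likewise irrelevant here: in this paper it occurs only in the definition of $\frakc^\exists$ as a minimum over systems with finite alphabets, and it plays no role in the Fact being proved. Bartoszy\'nski's actual proof of this direction is a genuinely nontrivial construction designed to resolve exactly the conflict problem you name; nothing in your sketch reconstructs it, so what you have established is only the easy half of the equality.
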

	
	\begin{thm}
		$\SAT(\aleph_0)$ implies $\cov(\meager) = \frakc$.
	\end{thm}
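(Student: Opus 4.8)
The plan is to reduce to the slalom reformulation supplied by the Fact just stated: it suffices to prove $\cov(\meager) \ge \frakc$, i.e. that for every $X \subseteq \omega^\omega$ with $\abs{X} < \frakc$ there is a slalom $S \in \prod_{n}[\omega]^{\le n}$ with $x \in^\infty S$ for all $x \in X$. I would obtain such an $S$ by realizing a single one-variable type in a carefully chosen saturated ultraproduct, and the whole design hinges on choosing the factors so that the ultraproduct's \emph{index set} $\omega$ is exactly the slalom's domain.

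Concretely, fix the ultrafilter $U$ witnessing $\SAT(\aleph_0)$ and consider the countable two-sorted structures $\scrA_i$ (coded as a single countable structure in a finite language via unary sort-predicates): a ``point'' sort interpreted as $\omega$, a ``set'' sort interpreted as $[\omega]^{\le i}$, and a membership relation $\in$ between them. Let $M = \prod_{i \in \omega} \scrA_i / U$; by the choice of $U$ this $M$ is saturated, and its set-sort is infinite, hence of size $\frakc$ by Fact \ref{ultrapowershavecontinuumsize}. Each $x \in X$ yields a point-sort parameter $[x] := [\seq{x(i) : i \in \omega}]$, so altogether we have $\abs{X} < \frakc$ parameters.

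Now consider the type in a set-sort variable $v$ given by
\[
p(v) = \{\, [x] \in v : x \in X \,\}.
\]
Finite satisfiability is immediate: for $x_1,\dots,x_j \in X$ and any $i \ge j$ the set $\{x_1(i),\dots,x_j(i)\}$ lies in $[\omega]^{\le i}$ and witnesses the finite subtype in $\scrA_i$, so the set of good indices is cofinite, hence in $U$; apply {\L}o\'s. Since $\abs{X} < \frakc = \abs{M}$ and $M$ is saturated, $p$ is realized by some $v^\ast = [\seq{c_i : i \in \omega}]$ with $c_i \in [\omega]^{\le i}$. Setting $S(i) := c_i$ gives a slalom in $\prod_i [\omega]^{\le i}$, and for each $x \in X$ the realization of $[x] \in v^\ast$ together with {\L}o\'s gives $\{\, i : x(i) \in S(i) \,\} \in U$, which is infinite as $U$ is nonprincipal; thus $x \in^\infty S$. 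This is exactly what the Fact requires, so $\cov(\meager) \ge \frakc$, and the reverse inequality is trivial.

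The point I expect to be the genuine obstacle, and the reason the structure must be set up this way, is the standard-versus-nonstandard issue: the naive strategy of realizing a type asserting ``$y$ captures each $x$ infinitely often'' and then reading off the standard coordinates of the realizing element fails, because a hyperfinite realizing slalom may capture each $x$ only at nonstandard positions, leaving its standard part useless. By instead making the ultraproduct index $\omega$ serve as the slalom's \emph{standard} domain, the single atomic formula $[x] \in v$ translates via {\L}o\'s directly into $U$-largeness — hence genuine infinitude — of the capture set, sidestepping any extraction problem. A secondary design choice that pays off is taking the set-sort of $\scrA_i$ to be $[\omega]^{\le i}$ rather than all finite sets, which automatically enforces $\abs{S(i)} \le i$ with no extra formulas in the type.
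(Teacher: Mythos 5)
Your proposal is correct and takes essentially the same approach as the paper: both reduce to the slalom characterization of $\cov(\meager)$, form the saturated ultraproduct of countable structures whose ``set'' elements are the members of $[\omega]^{\le i}$, realize the type with $<\frakc$ parameters asserting that a single slalom captures every $[x]$ for $x \in X$, and then use {\L}o\'s together with nonprincipality of $U$ to convert $U$-largeness of the capture sets into $\in^\infty$. The only difference is cosmetic: the paper uses the single-sorted structures $([\omega]^{\le i}, \subset)$ and codes each point $x(i)$ as the singleton $\{x(i)\}$, whereas you use a two-sorted point/set structure with a membership relation.
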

	\begin{proof}
		Take an ultrafilter $U$ that witnesses $\SAT$.
		Fix $X \subset \omega^\omega$ of size $< \frakc$.
		Define a language $\LangL$ by $\LangL = \{\subset\}$ and for each $i \in \omega$, define a $\LangL$-structure $\scrA_i$ by $\scrA_i = ([\omega]^{\le i}, \subset)$.
		For each $x \in \omega^\omega$, let $S_x = \seq{\{x(i)\} : i \in \omega}$.
		In the ultraproduct $\scrA_* = \prod_{i\in\omega} \scrA_i / U$, define a set $p$ of formulas of one free variable $S$ by
		\[
		p = \{ \quot{[S_x] \subset S} : x \in X \}.
		\]
		This $p$ is finitely satisfiable.
		In order to check this, let $x_0, \dots, x_n$ be finitely many members of $X$.
		Define $S$ by $S(m) = \{ x_0(m), \dots, x_n(m) \}$ for $m \ge n$. We don't need to care about $S(m)$ for $m < n$.
		Then this $S$ satisfies $[S_{x_i}] \subset [S]$ for all $i \le n$. 
		Moreover, the number of parameters of $p$ is $< \frakc$.
		
		So by $\SAT(\aleph_0)$, we can take $[S] \in \mathcal{A}_*$ that realizes $p$.
		Then $S$ fulfills $(\forall x \in X) (\{ n \in \omega : x(n) \in S(n) \} \in U)$.
		Thus $(\forall x \in X) (\exists^\infty n) (x(n) \in S(n))$.
	\end{proof}
	
	\begin{thm}
		$\SAT(\aleph_0)$ implies $2^{<\frakc} = \frakc$.
	\end{thm}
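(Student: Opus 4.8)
The plan is to prove the apparently stronger-sounding inequality $2^\kappa \le \frakc$ for every infinite cardinal $\kappa < \frakc$. Since $2^{<\frakc} \ge 2^{\aleph_0} = \frakc$ trivially, this gives $2^{<\frakc} = \sup_{\kappa<\frakc} 2^\kappa = \frakc$ (and when $\frakc = \aleph_1$ there is nothing beyond the trivial inequality to check). The engine is the standard observation that a \emph{saturated} model of size $\frakc$ is $\frakc$-saturated, so it realizes every type over a parameter set of size $<\frakc$; hence the number of pairwise incompatible $1$-types realizable over such a set is bounded by the size $\frakc$ of the model. To force $2^\kappa$ many such types over only $\kappa$ parameters despite the language being \emph{countable}, I would use a structure with the independence property.

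Concretely, fix an ultrafilter $U$ witnessing $\SAT(\aleph_0)$, let $\scrA$ be the countable random graph in the language $\LangL = \{E\}$, and set $\scrA_* = \scrA^\omega / U$. The random graph being infinite, $\scrA_*$ is not finite, so by Fact \ref{ultrapowershavecontinuumsize} we have $\abs{\scrA_*} = \frakc$; and by $\SAT(\aleph_0)$ the model $\scrA_*$ is saturated, hence $\frakc$-saturated.

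Now fix an infinite $\kappa < \frakc$ and choose distinct vertices $(a_\alpha)_{\alpha<\kappa}$ in $\scrA_*$ (possible since $\abs{\scrA_*}=\frakc>\kappa$). For each $S \subseteq \kappa$ consider the partial type
\[
p_S(x) = \{ \quot{x \mathrel{E} a_\alpha} : \alpha \in S \} \cup \{ \quot{\lnot(x \mathrel{E} a_\alpha)} : \alpha \in \kappa \setminus S \}.
\]
The extension axioms of the random graph --- for any disjoint finite vertex sets there is a vertex adjacent to all of the first and none of the second --- are first-order and hold in $\scrA$, hence in $\scrA_*$ by elementarity; they show that each $p_S$ is finitely satisfiable, i.e. a type over the parameter set $\{a_\alpha : \alpha<\kappa\}$ of size $\kappa<\frakc$. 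By $\frakc$-saturation each $p_S$ is realized by some $b_S$. If $S \ne S'$, then $p_S$ and $p_{S'}$ disagree on some $\quot{x \mathrel{E} a_\alpha}$, so $b_S \ne b_{S'}$; thus $S \mapsto b_S$ is an injection $\Pow(\kappa) \to \scrA_*$, giving $2^\kappa \le \abs{\scrA_*} = \frakc$.

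The main obstacle is conceptual rather than computational: one must locate a countable-language structure that nonetheless supports $2^\kappa$ pairwise incompatible types over $\kappa$ parameters, and the independence property of the random graph supplies exactly this. The one point genuinely needing care is the transfer of the extension axioms to the ultrapower --- which is what guarantees that \emph{every} $p_S$ is simultaneously finitely satisfiable --- together with the correct reading of ``saturated of size $\frakc$'' as ``$\frakc$-saturated'', so that types over parameter sets of size $\kappa < \frakc$ are indeed realized.
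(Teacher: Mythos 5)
Your proof is correct, and it follows the same overall strategy as the paper's: form a saturated ultrapower $\scrA_*$ of size $\frakc$ of a single countable structure, fix a parameter set of size $\kappa < \frakc$, and code each subset $S$ of it by a finitely satisfiable type whose realizations are pairwise distinct, giving $2^\kappa \le \abs{\scrA_*} = \frakc$ and hence $2^{<\frakc} = \frakc$. The difference lies in which structure carries the independence property and how finite satisfiability is checked. The paper works with $([\omega]^{<\omega}, \subset)$ and takes as parameters the elements $\iota([x]) = [\seq{\{x(n)\} : n \in \omega}]$ for $[x]$ ranging over a $\kappa$-sized set $F \subset \omega^\omega/U$; there the negative formulas $\quot{\iota(y) \not\subset z}$ are \emph{not} automatically finitely satisfiable, and the paper proves it by a pigeonhole argument inside the ultrafilter (if $y_j(i) \in z(i) = \{x_0(i), \dots, x_n(i)\}$ for $U$-many $i$, then $\{i : y_j(i) = x_k(i)\} \in U$ for some single $k$, forcing $[y_j] = [x_k]$ and contradicting distinctness). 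Your choice of the random graph makes precisely this step disappear: the extension axioms are part of the first-order theory, so by {\L}o\'s's theorem \emph{any} $\kappa$ distinct vertices of $\scrA_*$ serve as parameters, and every $p_S$ is finitely satisfiable with no further set-theoretic argument. What the paper's version buys is self-containedness (everything is verified by hand in the concrete structure); what yours buys is a cleaner division of labor, pushing the only nontrivial verification into a standard model-theoretic fact about the random graph. Both arguments conclude identically, and both rely on Fact \ref{ultrapowershavecontinuumsize} to see that the ultrapower has size exactly $\frakc$.
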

	\begin{proof}
		Take an ultrafilter $U$ over $\omega$ that witnesses $\SAT(\aleph_0)$.
		Fix $\kappa < \frakc$.
		
		Put $\LangL = \{\subset\}$ and define an $\LangL$-structure $\scrA$ by $\scrA = ([\omega]^{<\omega}, \subset)$. Put $\scrA^* = \scrA^\omega/U$.
		
		Define a map $\iota \colon \omega^\omega/U \to \scrA^*$ by $\iota([x]) = [\seq{\{x(n)\} : n \in \omega}]$.
		By Fact \ref{ultrapowershavecontinuumsize}, we have $\abs{\omega^\omega/U} = \frakc$. Take a subset $F$ of $\omega^\omega/U$ of size $\kappa$.
		
		For each $X \subset F$, let $p_X$ be a set of formulas with a free variable $z$ defined by
		\[
		p_X = \{ \quot{\iota(y) \subset z} : y \in X \} \cup \{ \quot{\iota(y) \not \subset z} : y \in F \setminus X \}
		\]
		
		Each $p_X$ is finitely satisfiable. In order to check this, take $[x_0], \dots, [x_n] \in X$ and $[y_0], \dots, [y_m] \in F \setminus X$.
		Put $z(i) = \{ x_0(i), \dots, x_n(i) \}$. Then $\iota([x_0]), \dots, \iota([x_n]) \subset_U [z]$.
		In order to prove $\iota([y_j]) \not \subset_U [z]$ for each $j \le m$, suppose that $\{ i \in \omega : y_j(i) \in z(i) \} \in U$.
		Then for each $i \in \omega$, there is a $k_i \le n$ such that $\{ i \in \omega : y_j(i) = x_{k_i}(i)\} \in U$.
		Then there is a $k \le n$ such that $\{ i \in \omega : y_j(i) = x_k(i)\} \in U$.
		This implies $[y_j] = [x_k]$, which is a contradiction.
		
		By $\SAT(\aleph_0)$, for each $X \subset F$, take $[z_X] \in \mathcal{A}^*$ that realizes $p_X$.
		For $X, Y \subset F$ with $X \ne Y$, we have $[z_X] \ne [z_Y]$.
		So $2^\kappa = \abs{\{[z_X] : X \subset F\}} \le \abs{\mathcal{A}^*} = \frakc$.
		Therefore we have proved $2^{<\frakc} = \frakc$. 
	\end{proof}
	
	\begin{thm}
		$\cov(\meager) = \frakc \land 2^{<\frakc} = \frakc$ implies $\SAT(\aleph_0)$.
	\end{thm}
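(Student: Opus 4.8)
The plan is to build, by transfinite recursion of length $\frakc$, a single nonprincipal ultrafilter $U$ on $\omega$ that realizes, in every ultraproduct of countable structures modulo $U$, every finitely satisfiable type with fewer than $\frakc$ parameters. Such an ultraproduct has cardinality finite or $\frakc$ by Fact \ref{ultrapowershavecontinuumsize}, and in either case a model of that size realizing all types over parameter sets of size $<\frakc$ is saturated; so this yields $\SAT(\aleph_0)$. The hypothesis $2^{<\frakc}=\frakc$ will be used to enumerate all realization tasks in $\frakc$ steps and to keep the approximating filters small, while $\cov(\meager)=\frakc$ will be used at each step to produce the realizing element by a Baire-category argument.

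For the setup, fix $F_0$ to be the Fréchet filter, so that $U$ will be nonprincipal (hence countably incomplete). Call a \emph{task} a pair $(\vec\scrA, p)$ where $\vec\scrA = \seq{\scrA_i : i\in\omega}$ is a sequence of countable $\LangL$-structures (some countable $\LangL$) and $p = \{\phi_\eta(x, \bar a_\eta) : \eta < \lambda\}$ is a set of $\LangL$-formulas with parameters $\bar a_\eta \in \prod_i \scrA_i$ and $\lambda < \frakc$; crucially, a task is a $U$-free object. Counting the structure sequences ($\frakc$ of them), the parameter sets (at most $\frakc^{<\frakc} = 2^{<\frakc} = \frakc$), and the formula sets ($\le 2^{<\frakc}=\frakc$) shows there are exactly $\frakc$ tasks, so by $2^{<\frakc}=\frakc$ we may enumerate them as $\seq{(\vec\scrA_\xi, p_\xi) : \xi < \frakc}$, and a bookkeeping argument lets us arrange that $p_\xi$ has $\lambda_\xi \le \abs{\xi}+\aleph_0$ parameters. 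We then build an increasing chain of filters $F_\xi$, each generated by $\le \abs{\xi}+\aleph_0 < \frakc$ sets, taking unions at limits; interleaving an enumeration of $\Pow(\omega)$ and adding $A_\xi$ or its complement at each step (whichever keeps the filter proper) makes $U = \bigcup_{\xi<\frakc} F_\xi$ an ultrafilter.

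The heart is the successor step. At stage $\xi$, suppose $p_\xi$ is \emph{finitely satisfiable modulo $F_\xi$}: for every finite $e \subset \lambda_\xi$ the set $D_e = \{i : \scrA_i \models \exists x\,\bigwedge_{\eta\in e}\phi_\eta(x, \bar a_\eta(i))\}$ is $F_\xi$-positive (otherwise do nothing at this step). Working in the Polish space $\prod_i \scrA_i$, for each finite $e \subset \lambda_\xi$ and each $B$ in a generating family of $F_\xi$ closed under finite intersections, set $W_{e,B} = \{f : (\exists i\in B)\,\scrA_i \models \bigwedge_{\eta\in e}\phi_\eta(f(i), \bar a_\eta(i))\}$. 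Each $W_{e,B}$ is open, and it is dense because $D_e \cap B$ is $F_\xi$-positive, hence infinite, so any finite condition extends by choosing a witness at some large $i \in D_e \cap B$. There are at most $\abs{\xi}+\aleph_0 < \frakc$ such sets, so by $\cov(\meager)=\frakc$ their intersection is nonempty; any $f$ in it satisfies $B \cap \bigcap_{\eta\in e} A_\eta^f \ne \emptyset$ for all $e,B$, where $A_\eta^f = \{i : \scrA_i \models \phi_\eta(f(i), \bar a_\eta(i))\}$. Thus $F_\xi \cup \{A_\eta^f : \eta < \lambda_\xi\}$ has the finite intersection property, and we let $F_{\xi+1}$ be the (still $<\frakc$-generated) filter it generates.

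Finally, this $U$ works: given any $\vec\scrA$ and any type $p$ over $\prod_i \scrA_i/U$ with $<\frakc$ parameters that is finitely satisfiable there, pick function-representatives of its parameters to view $p$ as a task, which appears at some stage $\xi$. Since every finite subtype's satisfaction set lies in $U$ and $F_\xi \subset U$, that set is $F_\xi$-positive, so $p$ is finitely satisfiable modulo $F_\xi$ and was handled at stage $\xi$; the resulting $f$ has all $A_\eta^f \in F_{\xi+1} \subset U$, so $[f]$ realizes $p$. I expect the main obstacle to be the bookkeeping that keeps each $F_\xi$ generated by fewer than $\frakc$ sets, since this is exactly what forces the number of dense sets $W_{e,B}$ below $\frakc$; it must be arranged carefully when $\frakc$ is singular, so that the suprema of the parameter-counts handled below stage $\xi$ stay under $\frakc$. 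By contrast, the density computation needs only positivity of the $D_e$ modulo $F_\xi$, which is precisely what finite satisfiability delivers.
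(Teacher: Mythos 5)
Your proof is correct and is essentially the paper's own argument (an adaptation of Ellentuck--Rucker): a length-$\frakc$ recursion building an ultrafilter as a union of $<\frakc$-generated filters, with $2^{<\frakc}=\frakc$ doing the enumeration/bookkeeping and $\cov(\meager)=\frakc$ producing the realizing function at each successor stage --- your Baire-category argument in $\prod_i \scrA_i$ is the same device as the paper's appeal to $\operatorname{MA}(\mathrm{Cohen})$ for the countable poset $\operatorname{Fn}(\omega,\omega)$ of finite partial choice functions (and phrasing it on the poset, as the paper does, sidesteps having to remark that $\cov(\meager)$ of the perfect Polish space $\prod_i\scrA_i$ agrees with the usual one). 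Two points of comparison are worth recording. First, a genuine (small) refinement on your side: your trigger condition at stage $\xi$ is $F_\xi$-\emph{positivity} of the sets $D_e$, whereas the paper's condition $(*)$ demands \emph{membership} in $F_\xi$. Since any set in $U$ is automatically positive with respect to every $F_\xi\subseteq U$, your final verification needs each task to be enumerated only once, while the paper must arrange each triple to occur cofinally and must invoke the regularity of $\frakc$ to find a stage $\alpha$ with all finite-subtype satisfaction sets already inside $F_\alpha$. Second, your closing worry about singular $\frakc$ is vacuous: $2^{<\frakc}=\frakc$ already implies $\frakc$ is regular (if $\cf(\frakc)=\lambda<\frakc$, then K\"onig's theorem gives $\frakc<\frakc^{\lambda}=2^{\lambda}\le 2^{<\frakc}=\frakc$, a contradiction), and this is exactly what keeps your generator counts below $\frakc$ and what licenses the regularity step in the paper's verification; no delicate bookkeeping is needed.
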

	\begin{proof}
		This proof is based on \cite[Theorem 1]{ellentuck1972martin}.
		
		Let $\seq{b_\alpha : \alpha < \frakc}$ be an enumeration of $\omega^\omega$.
		Let $\seq{(\LangL_\xi, \mathcal{B}_\xi, \Delta_\xi) : \xi < \frakc}$ be an enumeration of triples $(\LangL, \mathcal{B}, \Delta)$ such that $\LangL$ is a countable language, $\mathcal{B} = \seq{\mathcal{A}_i : i \in \omega}$ is a sequence of $\LangL$-structures with universe $\omega$ and $\Delta$ is a subset of $\operatorname{Fml}(\LangL^+)$ with $\abs{\Delta} < \frakc$.
		Here $\LangL^+ = \LangL \cup \{ c_\alpha : \alpha < \frakc \}$ where the $c_\alpha$'s are new constant symbols and $\operatorname{Fml}(\LangL^+)$ is the set of all $\LangL^+$ formulas with one free variable.
		Here we used the assumption $2^{<\frakc} = \frakc$.
		And ensure each $(\LangL, \mathcal{B}, \Delta)$ occurs cofinally in this sequence.
		
		For $\mathcal{B}_\xi = \seq{\mathcal{A}^\xi_i : i \in \omega}$, put $\mathcal{B}_\xi(i) = (\mathcal{A}^\xi_i, b_0(i), b_1(i), \dots)$, which is a $\LangL^+$-structure.
		
		Let $\seq{X_\xi : \xi < \frakc}$ be an enumeration of $\Pow(\omega)$.
		
		We construct a sequence $\seq{F_\xi : \xi < \frakc}$ of filters inductively so that the following properties hold:
		
		\begin{enumerate}
			\item $F_0$ is the filter consisting of all cofinite subsets of $\omega$.
			\item $F_\xi \subset F_{\xi + 1}$ and $F_\xi  = \bigcup_{\alpha < \xi} F_\alpha$ for $\xi$ limit.
			\item $X_\xi \in F_{\xi+1}$ or $\omega \setminus X_\xi \in F_{\xi+1}$.
			\item $F_\xi$ is generated by $< \frakc$ members.
			\item \label{indhyp} If 
			\begin{equation}
				\text{for all } \Gamma \subset \Delta_\xi \text{ finite, } \{i\in\omega : \Gamma \text{ is satisfiable in } \mathcal{B}_\xi(i) \} \in F_\xi, \label{sh} \tag{$*$}
			\end{equation}
			then there is a $f \in \omega^\omega$ such that for all $\varphi \in \Delta_\xi$, $\{ i \in \omega : f(i) \text{ satisfies } \varphi \text{ in } \mathcal{B}_\xi(i) \} \in F_{\xi+1}$.
		\end{enumerate}
		
		Suppose we have constructed $F_\xi$. We construct $F_{\xi+1}$.
		Let $F_\xi'$ be a generating subset of $F_\xi$ with $\abs{F_\xi'} < \frakc$.
		If (\ref{sh}) is false, let $F_{\xi+1}$ be the filter generated by $F_\xi' \cup \{X_\xi\}$ or $F_\xi' \cup \{\omega \setminus X_\xi\}$.
		Suppose (\ref{sh}).
		
		Put $\P = \operatorname{Fn}(\omega, \omega) = \{ p : \text{$p$ is a finite partial function from $\omega$ to $\omega$}\}$.
		For $n \in \omega$, put
		\[D_n = \{ p \in \P : n \in \dom p \}.\]
		For $A \in F_\xi'$ and $\varphi_1, \dots, \varphi_n \in \Delta_\xi$, put
		\[
		E_{A,\varphi_1,\dots,\varphi_n} = \{ p \in \P : (\exists i \in \dom p \cap A) (p(i) \text{ satisfies } \varphi_1, \dots, \varphi_n \text{ in } \mathcal{B}_\xi(i)) \}.
		\]
		Each $D_n$ is clearly dense.
		In order to show that each $E_{A,\varphi_1,\dots,\varphi_n}$ is dense, take $p \in \P$.
		By (\ref{sh}) and the property $A \in F_\xi$, we can take $i \in A \setminus \dom p$ and $k \in \omega$ such that $k$ satisfies $\varphi_1, \dots, \varphi_n$ in $\mathcal{B}_\xi(i)$.
		Put $q = p \cup \{ (i, k) \}$. This is an extension of $p$ in $E_{A,\varphi_1,\dots,\varphi_n}$.
		
		By using $\operatorname{MA}(\textrm{Cohen})$, take a generic filter $G \subset \P$ with respect to above dense sets. Put $f = \bigcup G$.
		Then $F_\xi'' := F_\xi' \cup \{ Y_\varphi : \varphi \in \Delta_\xi\}$ satisfies finite intersection property, where $Y_\varphi = \{ i \in \omega : f(i) \text{ satisfies } \varphi \text{ in } \mathcal{B}_\xi(i) \}$.
		In order to check this, let $A \in F_\xi'$ and $\varphi_1, \dots, \varphi_n \in \Delta_\xi$.
		Then by genericity, we can take $p \in G \cap E_{A,\varphi_1,\dots,\varphi_n}$.
		So we can take $i \in \dom p \cap A$ such that $p(i) \text{ satisfies } \varphi_1, \dots, \varphi_n \text{ in } \mathcal{B}_\xi(i)$.
		Then we have $i \in A \cap Y_{\varphi_1} \cap \dots \cap Y_{\varphi_n}$.
		
		Let $F_{\xi+1}$ be the filter generated by $F_\xi'' \cup \{X_\xi\}$ or $F_\xi'' \cup \{\omega \setminus X_\xi\}$.
		
		We have constructed $\seq{F_\xi : \xi < \frakc}$.
		In order to check that the resulting ultrafilter $F = \bigcup_{\xi < \frakc} F_\xi$ witnesses $\SAT(\aleph_0)$, let $\LangL$ and $\scrB = \seq{\scrA_i : i \in \omega}$ satisfy the assumption of the theorem.
		Let $\Delta$ be a subset of $\operatorname{Fml}(\LangL^+)$ with $\abs{\Delta} < \frakc$.
		Assume that for all $\Gamma \subset \Delta$ finite, $X_\Gamma := \{i\in\omega : \Gamma \text{ is satisfiable in } \mathcal{B}_\xi(i) \} \in F$.
		By the regularity of $\frakc$, we have $\alpha < \frakc$ such that for all $\Gamma \subset \Delta$ finite, $X_\Gamma \in F_\alpha$.
		Let $\xi \ge \alpha$ be satisfying $(\LangL_\xi, \scrB_\xi, \Delta_\xi) = (\LangL, \scrB, \Delta)$.
		Then by (\ref{indhyp}), there is a $f \in \omega$ such that for all $\varphi \in \Delta$, $\{ i \in \omega : f(i) \text{ satisfies } \varphi \text{ in } \mathcal{B}(i) \} \in F$.
		Thus $\prod_{i \in \omega} \scrA_i / F$ is saturated.

	\end{proof}
	
	\section{$\KT(\aleph_0)$ implies $ \frakc^\exists \le \frakd$}
	
	In this section, we will show the following theorem.
	This proof is based on \cite{shelah1992vive} and \cite{abraham2010proper}.
	
	\begin{thm}\label{ktaleph0impdc}
		$\KT(\aleph_0)$ implies $\frakc^\exists \le \frakd$.
	\end{thm}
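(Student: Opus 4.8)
The plan is to fix, once and for all, a pair $(c,h)$ with $c,h\in\omega^\omega$ and $\sum h/c<\infty$, and to show $\frakc^\exists_{c,h}\le\frakd$; since $\frakc^\exists\le\frakc^\exists_{c,h}$ this already gives the theorem. The mechanism is a Galois--Tukey morphism
\[
(\varphi,\psi)\colon \wLc(c,h)\to\Cof(\omega^\omega/U,<_U),
\]
where $U$ is an ultrafilter produced by $\KT(\aleph_0)$ applied to a suitable pair of models. Granting such a morphism, the Galois--Tukey fact quoted above gives $\frakc^\exists_{c,h}=\norm{\wLc(c,h)}\le\norm{\Cof(\omega^\omega/U,<_U)}=\cf(\omega^\omega/U)$. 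The last ingredient is the general observation that $\cf(\omega^\omega/U)\le\frakd$ for every ultrafilter $U$: any $<^*$-dominating family of size $\frakd$ is automatically $<_U$-dominating, since $f<^*g$ implies $f<_U g$. Combining these, $\frakc^\exists_{c,h}\le\frakd$.

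To produce $U$ and the morphism I would, following \cite{shelah1992vive} and \cite{abraham2010proper}, design two countable structures $\scrA\equiv\scrB$ in a finite language $\LangL$ that internalize the localization problem at parameter $(c,h)$. Concretely the models carry a definable linear order of ``levels'' (a copy of $(\omega,<)$), at each level $k$ a fibre of $c(k)$ ``points'' $p_{k,0},\dots,p_{k,c(k)-1}$ and a family of ``slalom'' elements of internal width $\le h(k)$, together with a membership relation between points and slaloms. Along the diagonal level $[\id]$ the ultrapower $\scrA^\omega/U$ then contains a definable copy of $\prod c$, a point $x$ being represented by $\hat x=[\seq{p_{k,x(k)} : k\in\omega}]$, and a definable copy of $S(c,h)$, a slalom $s$ being represented by $[\seq{s(k):k\in\omega}]$, with $\hat x$ a member of the slalom coded by $s$ exactly when $\{k:x(k)\in s(k)\}\in U$, hence for infinitely many $k$. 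The two models are arranged to be elementarily equivalent while differing precisely in the cofinal behaviour of their slalom elements, so that an isomorphism $\pi\colon\scrA^\omega/U\to\scrB^\omega/U$ is forced to transport a $<_U$-cofinal family of slalom elements on the $\scrB$-side back to a family of slaloms on the $\scrA$-side.

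With $U$ and $\pi$ in hand I would define $\psi$ by sending $[g]\in\omega^\omega/U$ to the width-$\le h$ slalom read off from the $\scrA$-side pull-back under $\pi^{-1}$ of the slalom element of $\scrB^\omega/U$ indexed by $[g]$, and $\varphi$ by sending $x\in\prod c$ to a function measuring the least level past which $\hat x$ becomes captured. The morphism inequality $\varphi(x)<_U[g]\implies x\in^\infty\psi([g])$ then reduces to the membership translation above: boundedness of $\hat x$ below the $[g]$-th slalom element becomes $\{k:x(k)\in\psi([g])(k)\}\in U$, which yields $x\in^\infty\psi([g])$. As a sanity check, the constraint $\sum h/c<\infty$ is exactly what makes each set $\{x:x\in^\infty s\}$ null by Borel--Cantelli, consistent with the general bound that any capturing family has size at least $\cov(\nul)$; so $\cov(\nul)\le\frakc^\exists\le\frakd$ recovers the weaker implication advertised in the abstract.

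The hard part is the model design of the middle paragraph: one must exhibit genuinely elementarily equivalent countable $\scrA,\scrB$ for which (i) the membership translation holds coordinatewise, (ii) the slalom elements have summable internal width $h$, and (iii) the isomorphism supplied by $\KT(\aleph_0)$ is forced to induce a $<_U$-cofinal family of slaloms covering \emph{all} of $\prod c$ infinitely often rather than merely part of it. Verifying (iii) — that the isomorphism cannot avoid producing a full capturing family, so that the induced family has size only $\cf(\omega^\omega/U)\le\frakd$ — is where the elementary equivalence of $\scrA$ and $\scrB$ must be exploited most delicately, and is the step I expect to require the genuine combinatorial work of \cite{shelah1992vive}.
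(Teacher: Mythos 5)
Your reduction skeleton is sound and is indeed the shape of the paper's argument: a dominating family of size $\frakd$ yields a $<_U$-cofinal (in fact covering) family in a suitable ultraproduct, and membership in a slalom element of the ultraproduct translates into $\in^\infty$, giving $\frakc^\exists \le \frakd$. But the entire content of the theorem lies in the step you leave as a black box, and your proposal does not contain the idea that fills it. The paper's solution to your desiderata (i)--(iii) is this: take $\scrA = \Gamma$, the disjoint union of the \emph{finite} bipartite graphs $G_n = \Delta_{n^3,n}$ (points $= \{1,\dots,n^3\}$, slaloms $=$ nonempty subsets of size $\le n$, edge $=$ membership), equipped with the order $\triangleleft$ between components, and take $\scrB = \Gamma_{\mathrm{NS}}$ to be a \emph{countable non-standard elementary extension} of $\Gamma$. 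Elementary equivalence is then automatic --- there is no delicate ``arranging'' of two models to be done --- and the asymmetry you need is precisely that $\Gamma_{\mathrm{NS}}$ has infinite components, each satisfying the common-neighbor property of Lemma \ref{gammansproperty}: every finite set of points lies in a single slalom. That property is exactly what converts a $<^*$-dominating family of size $\frakd$ into a covering family of slalom elements in any ultraproduct of such components (Lemma \ref{dominatinglem}). So your step (iii) requires no delicate exploitation of elementary equivalence at all: the covering family is produced on the non-standard side, and the isomorphism supplied by $\KT(\aleph_0)$ merely transports it to the standard side, where Lemma \ref{standardproperty} says that fewer than $\frakc^\exists$ slaloms can never cover a component in a final segment.

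The step in your sketch that would actually fail is working ``along the diagonal level $[\id]$'' with a pair $(c,h)$ fixed in advance. An isomorphism of ultrapowers maps connected components to connected components and preserves $\triangleleft$, but nothing forces it to send any particular component to the diagonal one; all you can conclude is that \emph{some} component above the diagonal on the standard side acquires a small covering family. Such a component has the form $\prod_n G_{g(n)}/p$ with $g$ above the identity (and not necessarily injective), so the localization parameters you end up with are the shifted $(c\circ g, h\circ g)$, not your $(c,h)$. This is why the paper (a) phrases its two key lemmas as ``cofinally many components'' (Lemma \ref{nonstandardproperty}) versus ``a final segment of components'' (Lemma \ref{standardproperty}), using the order $\triangleleft$ to guarantee the two families of components meet, and (b) chooses $c(n) = n^3$, $h(n) = n$, so that the width-to-size ratio $1/n^2$ is monotone and the shifted parameters remain summable no matter what $g$ is. For an arbitrary summable pair the ratio need not be monotone, and for non-injective $g$ the shifted sum $\sum_n h(g(n))/c(g(n))$ can diverge even though $\sum_n h(n)/c(n) < \infty$; so the fixed-pair conclusion $\frakc^\exists_{c,h} \le \frakd$ is not what this method yields. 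It yields only the minimum $\frakc^\exists \le \frakd$ --- which is, fortunately, all the theorem asks, but your write-up should be restructured accordingly rather than around a pre-chosen $(c,h)$ and the diagonal level.
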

	
	\begin{defi}
		Define a language $\LangL$ by $\LangL = \{E, U, V\}$, where $E$ is a binary predicate and $U, V$ are unary predicates.
		We say a $\LangL$-structure $M = (\abs{M}, E^M, U^M, V^M)$ is a {\itshape bipartite graph} if the following conditions hold:
		\begin{enumerate}
			\item $U^M \cup V^M = \abs{M}$,
			\item $U^M \cap V^M = \emptyset$,
			\item $(\forall x, y \in \abs{M}) (x \mathrel{E^M} y \rightarrow (x \in U^M \text{ and } y \in V^M))$.
		\end{enumerate}
	\end{defi}
	
	\begin{defi}
		For $n, k \in \omega$, define a bipartite graph $\Delta_{n,k}$ as follows:
		\begin{enumerate}
			\item $U^{\Delta_{n,k}} = \{ 1, 2, 3, \dots, n \}$
			\item $V^{\Delta_{n,k}} = [\{ 1, 2, 3, \dots, n\}]^{\le k} \setminus \{\emptyset\}$
			\item For $u \in U^{\Delta_{n,k}}, v \in V^{\Delta_{n,k}}$, $u \mathrel{E^{\Delta_{n,k}}} v$ iff $u \in v$.
		\end{enumerate}
	\end{defi}
	
	\begin{defi}
		For $n \in \omega$, Let $G_n = \Delta_{n^3, n}$.
		Let $\Gamma$ be the disjoint union of $(G_n : n \ge 2)$.
		
		We define a natural order $\triangleleft$ on $\Gamma$ by $x \triangleleft y$ if $m < n$ for $x \in G_m, y \in G_n$.
		Then $\Gamma$ is a bipartite graph with an order $\triangleleft$.
		Put $\LangL' = \LangL \cup \{ \triangleleft \}$. From now on, we consider $\LangL'$-structures which are elementarily equivalent to $\Gamma$.
	\end{defi}
	
	\begin{defi}
		Let $\Gamma_\mathrm{NS}$ be a countable non-standard elementary extension of $\Gamma$.
	\end{defi}
	
	When we say connected components, we mean the connected components when we ignore the orientation of the edges.
	
	\begin{lem}
		Let $M$ be an $\LangL'$-structure that is elementarily equivalent to $\Gamma$.
		Then the connected components of $M$ are precisely the maximal antichains of $M$ with respect to $\triangleleft$.
	\end{lem}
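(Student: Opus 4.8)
The plan is to reduce the statement to a handful of first-order $\LangL'$-sentences that hold in $\Gamma$ and then transfer them to $M$ by elementary equivalence. First I would introduce the auxiliary binary relation $x \sim y$, defined by $\neg(x \triangleleft y) \wedge \neg(y \triangleleft x)$, which in $\Gamma$ says exactly that $x$ and $y$ lie in the same $G_n$. The assertions ``$\triangleleft$ is irreflexive and transitive'' and ``$\sim$ is transitive'' are each a single universal sentence true in $\Gamma$, hence true in $M$; consequently $\sim$ is an equivalence relation on $\abs{M}$. A routine check then identifies the maximal antichains of $(M, \triangleleft)$ with the $\sim$-classes: a $\sim$-class is an antichain because distinct $\sim$-equivalent elements are $\triangleleft$-incomparable, it is maximal because every element outside it is $\triangleleft$-comparable to its members, and conversely any antichain consists of pairwise $\sim$-equivalent elements and so, by transitivity, is contained in a single class.

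It then remains to show that the connected components of $M$ are exactly the $\sim$-classes. For one inclusion I would use that in $\Gamma$ every edge stays within a single component, i.e. $\Gamma \models \forall x\, \forall y\, (x \mathrel{E} y \to x \sim y)$. Transferring this sentence to $M$ and invoking transitivity of $\sim$ shows that any two vertices joined by a path (ignoring orientation) are $\sim$-equivalent, so each connected component of $M$ is contained in a single $\sim$-class.

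The converse inclusion --- that each $\sim$-class is genuinely connected in $M$ --- is the main obstacle, since connectivity is not in general expressible in first-order logic. The resolution is that the components $G_n = \Delta_{n^3, n}$ have \emph{uniformly} bounded diameter. Indeed, for every $n \ge 2$ the two-element sets belong to $V^{G_n}$, and a short computation bounds the diameter of $G_n$ by $4$: two $U$-vertices $u_1, u_2$ are joined by $u_1 - \{u_1, u_2\} - u_2$, while the $U$--$V$ and $V$--$V$ cases are handled similarly by inserting at most one or two such detours. Hence the sentence asserting ``for all $x, y$ with $x \sim y$ there is a path of length $\le 4$ between them'' --- a finite disjunction over path lengths, each an existential formula built from the symmetrized edge relation $x \mathrel{E} y \vee y \mathrel{E} x$ --- holds in $\Gamma$ and therefore in $M$. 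Thus any two $\sim$-equivalent vertices of $M$ lie in a common connected component. Combining the two inclusions shows that the connected components of $M$ coincide with the $\sim$-classes, which we already identified with the maximal antichains, completing the proof.
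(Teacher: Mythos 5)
Your proof is correct and follows exactly the route the paper intends: its one-line proof says to ``use elementarity and the fact that any two connected vertexes in $\Gamma$ have path of length at most $4$,'' which is precisely your key step of expressing connectivity first-order via the uniform diameter bound and transferring it to $M$. Your write-up is simply a careful elaboration of that sketch (including the identification of maximal antichains with incomparability classes), with no gaps.
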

	\begin{proof}
		Use elementarity and the fact that any two connected vertexes in $\Gamma$ have path of length at most $4$.
	\end{proof}
	
	Therefore, $\triangleleft$ induces an order on the connected components of $M$ and it is denoted also by $\triangleleft$.
	
	\begin{lem}\label{gammansproperty}
		Every infinite connected component $C$ of $\Gamma_\mathrm{NS}$ satisfies the following:
		\[
		(\forall F \subset C \cap U \text{ finite}) (\exists v \in C \cap V) (v \text{ has an edge to each point in } F).
		\]
	\end{lem}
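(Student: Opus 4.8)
The plan is to reduce the statement to the transfer of a family of first-order $\LangL'$-sentences, together with one elementary observation about the $U$-side of infinite components. Recall from the construction that $G_n = \Delta_{n^3,n}$ has exactly $n^3$ many $U$-vertices, while its $V$-side contains \emph{every} nonempty subset of $U^{G_n}$ of size at most $n$; in particular, for $m \le n$ any $m$ distinct $U$-vertices of $G_n$ have a common $V$-neighbor, namely the $m$-element set they form. By the preceding lemma the connected components of any structure elementarily equivalent to $\Gamma$ are exactly the maximal $\triangleleft$-antichains, so ``$x$ and $y$ lie in a common component'' is expressed by $\triangleleft$-incomparability, hence is first-order.

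First I would write down, for each standard $m \in \omega$, the sentence $\sigma_m$ asserting: for all pairwise $\triangleleft$-incomparable distinct $u_1,\dots,u_m \in U$, if there exist pairwise $\triangleleft$-incomparable distinct $w_1,\dots,w_{m^3} \in U$ each $\triangleleft$-incomparable to every $u_i$, then there is $v \in V$, $\triangleleft$-incomparable to every $u_i$, with $u_i \mathrel{E} v$ for all $i \le m$. The hypothesis forces the common component of the $u_i$ to contain at least $m^3$ many $U$-vertices, so in $\Gamma$ this component is some $G_n$ with $n^3 \ge m^3$, i.e. $n \ge m$; then $\{u_1,\dots,u_m\}$ is a legitimate $V$-vertex of $G_n$ adjacent to each $u_i$, so $\sigma_m$ holds in $\Gamma$. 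Since $\Gamma_{\mathrm{NS}} \equiv \Gamma$, each $\sigma_m$ holds in $\Gamma_{\mathrm{NS}}$ as well.

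It remains to guarantee that in an infinite component the hypothesis of $\sigma_m$ is always met, and here I would use a second transfer: the sentence stating that distinct $V$-vertices have distinct sets of $U$-neighbors holds in $\Gamma$ (distinct subsets differ on some element) and hence in $\Gamma_{\mathrm{NS}}$. If $C$ is a connected component with $C \cap U$ finite, then every $V$-vertex of $C$ has its neighborhood contained in the finite set $C \cap U$, so by injectivity of neighborhoods there are at most $2^{|C \cap U|}$ many $V$-vertices in $C$; thus $C$ is finite. Contrapositively, an infinite component $C$ has $C \cap U$ infinite. Now, given a finite $F \subseteq C \cap U$, put $m = |F|$; since $|C \cap U| \ge m^3$ the hypothesis of $\sigma_m$ is satisfied in $\Gamma_{\mathrm{NS}}$ by taking the elements of $F$ as the $u_i$, and the conclusion yields $v \in C \cap V$ adjacent to every point of $F$, as required.

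The main obstacle is choosing $\sigma_m$ so that its hypothesis, phrased purely combinatorially in terms of how many $U$-vertices the component carries, correctly captures the index condition $n \ge m$; this is exactly where the choice $|U^{G_n}| = n^3$ enters, the point being that ``at least $m^3$ many $U$-vertices'' is equivalent in $\Gamma$ to $n \ge m$. The supporting observation that infinite components have infinite $U$-side is the other ingredient, ensuring that every standard threshold $m^3$ is automatically available inside any infinite component of $\Gamma_{\mathrm{NS}}$.
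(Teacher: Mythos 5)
Your proof is correct and takes essentially the same route as the paper: write down, for each standard $m$, a first-order sentence over $\LangL'$ expressing the common-neighbor property for components that are "large enough," verify it in $\Gamma$, and transfer it to $\Gamma_\mathrm{NS}$ by elementary equivalence. The only differences are matters of detail the paper leaves implicit — you encode "the component has index $\ge m$" by "it contains at least $m^3$ many $U$-vertices," and you explicitly prove (via transfer of injectivity of $V$-neighborhoods) that an infinite component of $\Gamma_\mathrm{NS}$ has infinitely many $U$-vertices, so the hypothesis of your sentence $\sigma_m$ is always available.
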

	\begin{proof}
		Let $F = \{ u_1, \dots, u_n \}$.
		Observe that
		\begin{align*}
			\Gamma \models (\forall x_1) \dots (\forall x_n) & [
			x_1, \dots, x_n \text{ are points in } U \text{ and belong to} \\
			& \text {the same connected component and } \\
			&\text{ the index of this connected component is } \ge n \\
			& \rightarrow (\exists y) [y \text{ belongs to this component, } y \in V \text{ and }  x_1, \dots, x_n \mathrel{E} y]].
		\end{align*}
		By elementarity, $\Gamma_\mathrm{NS}$ satisfies the same formula.
	\end{proof}
	
	\begin{lem}\label{dominatinglem}
		Let $\seq{\Delta_n : n \in \omega}$ be a sequence of bipartite graphs with $\abs{U^{\Delta_n}} = \abs{V^{\Delta_n}} = \aleph_0$.
		Suppose that for each $n \in \omega$,
		\[
		(\forall F \subset U^{\Delta_n} \text{ finite}) (\exists v \in V^{\Delta_n}) (v \text{ has an edge to each point in } F).
		\]
		Then for every ultraproduct $R := \prod_{n \in \omega} \Delta_n / q$, we have
		\[
		(\exists \seq{v_i : i < \frakd} \text{ with each } v_i \in V^R) (\forall u \in U^R) (\exists i < \frakd) (u \mathrel{E^R} v_i).
		\]
	\end{lem}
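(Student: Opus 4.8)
The plan is to realize the required covering family as the image of a dominating family under a uniformly defined map into $V^R$, which amounts to producing a Galois--Tukey morphism from the relational system $(U^R, V^R, E^R)$ to $\D = (\omega^\omega, \omega^\omega, <^*)$ and then applying the Fact on Galois--Tukey morphisms stated above. Since $\norm{(U^R, V^R, E^R)}$ is by definition the least cardinality of a subset of $V^R$ that $E^R$-covers all of $U^R$, the desired conclusion is exactly $\norm{(U^R, V^R, E^R)} \le \norm{\D} = \frakd$.

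First I would fix, for each $n$, an enumeration $U^{\Delta_n} = \{u^n_0, u^n_1, u^n_2, \dots\}$ (possible since $\abs{U^{\Delta_n}} = \aleph_0$) and write $F^n_m = \{u^n_0, \dots, u^n_{m-1}\}$ for its initial segments. Applying the hypothesis to each finite set $F^n_m$, I choose $v^n_m \in V^{\Delta_n}$ having an edge to every point of $F^n_m$. The reason for using nested initial segments is monotonicity: if $j < m$ then $u^n_j \in F^n_m$, so $u^n_j \mathrel{E^{\Delta_n}} v^n_m$. For $g \in \omega^\omega$ I set $v_g = [\seq{v^n_{g(n)} : n \in \omega}] \in V^R$, and for $u \in U^R$ I pick a function $f_u \in \omega^\omega$ with $u = [\seq{u^n_{f_u(n)} : n \in \omega}]$; this is possible because $u \in U^R$ means $\{n : a_n \in U^{\Delta_n}\} \in q$ for any representative $\seq{a_n}$ of $u$, and on that $q$-large set each $a_n$ equals some $u^n_{f_u(n)}$ (set $f_u$ to $0$ off this set).

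The key step is the implication $f_u <^* g \implies u \mathrel{E^R} v_g$. Indeed, if $f_u <^* g$ then $\{n : f_u(n) < g(n)\}$ is cofinite, hence lies in $q$ by nonprincipality; and for each such $n$ monotonicity gives $u^n_{f_u(n)} \in F^n_{g(n)}$, so $u^n_{f_u(n)} \mathrel{E^{\Delta_n}} v^n_{g(n)}$. Thus $\{n : u^n_{f_u(n)} \mathrel{E^{\Delta_n}} v^n_{g(n)}\} \in q$, which by {\L}o\'s's theorem is precisely $u \mathrel{E^R} v_g$. Setting $\varphi(u) = f_u$ and $\psi(g) = v_g$ therefore yields a Galois--Tukey morphism $(\varphi, \psi)$ from $(U^R, V^R, E^R)$ to $\D$, and the Fact delivers $\norm{(U^R, V^R, E^R)} \le \frakd$; concretely, a dominating family $D \subseteq \omega^\omega$ of size $\frakd$ gives the covering family $\{v_g : g \in D\}$, since any $u \in U^R$ admits some $g \in D$ with $f_u <^* g$, and reindexing $D$ by $\frakd$ produces the required $\seq{v_i : i < \frakd}$. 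I do not expect a genuine obstacle: the only point requiring care is the bookkeeping that converts eventual $<^*$-domination into membership of the edge-set in $q$, and this is arranged exactly by the nested witnesses $v^n_m$ together with the nonprincipality of $q$.
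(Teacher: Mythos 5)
Your proposal is correct and is essentially the paper's own argument: the paper also fixes witnesses $v_{n,m} \in V^{\Delta_n}$ connected to the first $m$ points of $U^{\Delta_n}$, pushes a dominating family $\{f_i : i < \frakd\}$ through $g \mapsto [\seq{v_{n,g(n)}}]$, and uses $<^*$-domination of a representative of $u$ to conclude $u \mathrel{E^R} v_i$ modulo $q$. Your explicit packaging as a Galois--Tukey morphism into $\D$ is only a cosmetic difference (and a slightly more careful treatment of choosing the representative $f_u$); the construction and the key step are the same.
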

	\begin{proof}
		We may assume that each $U^{\Delta_n} = \omega$.
		Let $\{ f_i : i < \frakd \}$ be a cofinal subset of $(\omega^\omega, <^*)$.
		For each $n, m \in \omega$, take $v_{n, m} \in V^{\Delta_n}$ that is connected with first $m$ points in $U^{\Delta_n}$.
		For $i < \frakd$, put
		\[
		v_i = [\seq{v_{n, f_i(n)} : n \in \omega}].
		\]
		Let $[u] \in U^R$. Consider $u$ as an element of $\omega^\omega$.
		Take $f_i$ that dominates $u$. Then we have
		\[
		\{ n \in \omega : u(n) \mathrel{E^{\Delta_n}} v_{n, f_i(n)} \} \in q.
		\]
		Therefore $[u] \mathrel{E^R} v_i$.
	\end{proof}
	
	\begin{lem}\label{nonstandardproperty}
		Let $q$ be an ultrafilter over $\omega$ and put $Q = (\Gamma_\mathrm{NS})^\omega/q$.
		Then there exist cofinally many connected components $C$ with respect to $\triangleleft$ such that
		\[
		(\exists \seq{v_i : i < \frakd} \text{ with each } v_i \in C \cap V^Q) (\forall u \in C \cap U^Q) (\exists i < \frakd) (u \mathrel{E^Q} v_i).
		\]
	\end{lem}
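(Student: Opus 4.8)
The plan is to realize the required connected components of $Q$ as ultraproducts of \emph{infinite} connected components of $\Gamma_\mathrm{NS}$ and to read off the dominating property from Lemma \ref{dominatinglem}. First I record how the connected components of $Q$ look. Since $Q \equiv \Gamma$, its connected components are the maximal $\triangleleft$-antichains, and two points $[\seq{a_n}], [\seq{a'_n}]$ of $Q$ lie in the same component iff $\{ n : a_n, a'_n \text{ lie in the same component of } \Gamma_\mathrm{NS}\} \in q$. Thus a connected component of $Q$ is determined by a $q$-class $\seq{C_n : n \in \omega}$ of connected components of $\Gamma_\mathrm{NS}$; writing $C = [\seq{C_n}]_q$ for the corresponding component, one checks that as a bipartite graph $C \cong \prod_{n \in \omega} C_n / q$, with $C \cap U^Q = \prod_n (C_n \cap U)/q$, $C \cap V^Q = \prod_n (C_n \cap V)/q$, and $E^Q$ inducing the edge relation.

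Step 1 (good components). I claim that whenever each $C_n$ is an infinite connected component of $\Gamma_\mathrm{NS}$, the component $C = [\seq{C_n}]_q$ has the desired property. Indeed, since $\Gamma_\mathrm{NS}$ is countable while an infinite component has non-standard $U$- and $V$-size, each $C_n$ satisfies $\abs{C_n \cap U} = \abs{C_n \cap V} = \aleph_0$; and by Lemma \ref{gammansproperty} each $C_n$ satisfies the finite-meeting condition that every finite $F \subset C_n \cap U$ has a common neighbour in $C_n \cap V$. Hence $\seq{C_n}$ meets the hypotheses of Lemma \ref{dominatinglem}, whose conclusion for $R = \prod_n C_n/q \cong C$ is exactly the displayed statement, with the $v_i$ ranging over $C \cap V^Q$.

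Step 2 (the order lemma). It remains to see that such components are cofinal in $(\text{components of } Q, \triangleleft)$, and the crux is the following fact about $\Gamma_\mathrm{NS}$:
\begin{equation*}
(\dagger)\qquad \text{for every component } D \text{ there is an infinite component } C \text{ with } D \triangleleft C.
\end{equation*}
I would deduce $(\dagger)$ from three transfer facts. First, every infinite component lies $\triangleleft$-above every finite one: a finite component has $U$-size equal to some cube $m^3$ (a value attained in $\Gamma$), and the first-order sentence ``any $U$-point whose component is $\triangleleft$-below a component of $U$-size $m^3$ has component of $U$-size $\le (m-1)^3$'' holds in $\Gamma$, hence in $\Gamma_\mathrm{NS}$, so nothing infinite sits $\triangleleft$-below a finite component; in particular the finite components form a $\triangleleft$-initial segment. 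Second, $\Gamma \models \forall x\, \exists y\, (x \triangleleft y)$, so by elementarity $\Gamma_\mathrm{NS}$ has no maximal component. Third, being non-standard, $\Gamma_\mathrm{NS}$ has at least one infinite component — otherwise, since the isomorphism type and the multiplicity of each finite component are first-order and inherited from $\Gamma$, we would get $\Gamma_\mathrm{NS} \cong \Gamma$. Granting these, $(\dagger)$ follows: if $D$ is finite, any infinite component works; if $D$ is infinite, any component $\triangleleft$-above $D$ (which exists) is again infinite, as the finite components are an initial segment.

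Finally, given an arbitrary component $D = [\seq{D_n}]_q$ of $Q$, apply $(\dagger)$ to choose for each $n$ an infinite component $C_n \triangleright D_n$; then $C := [\seq{C_n}]_q$ satisfies $D \triangleleft C$ and, by Step 1, has the required property, so the good components are cofinal. I expect Step 2 to be the main obstacle: locating finite versus infinite components of $\Gamma_\mathrm{NS}$ in the $\triangleleft$-order and extracting an infinite component from non-standardness, all via first-order transfer from $\Gamma$, is where the real content lies, whereas Steps 1 and the assembly are a routine application of Lemmas \ref{gammansproperty} and \ref{dominatinglem}.
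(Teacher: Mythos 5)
Your proof is correct and takes essentially the same route as the paper's: realize a component of $Q$ lying above a given one as an ultraproduct $\prod_{n} C_n/q$ of infinite components $C_n$ of $\Gamma_\mathrm{NS}$, then apply Lemma \ref{gammansproperty} and Lemma \ref{dominatinglem}. The only difference is that the paper simply asserts that above any point of $\Gamma_\mathrm{NS}$ there is an infinite component, whereas your Step 2 spells out the first-order transfer argument ($\dagger$) justifying that assertion.
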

	\begin{proof}
		Fix a connected component $C_0$ of $Q$ and $[x_0] \in C_0$.
		Then for each $n \in \omega$, there is an infinite component $C_n$ above $x_0(n)$.
		Now
		\[
		C = \{ [x] \in Q : x \in (\Gamma_\mathrm{NS})^\omega \text{ and } (\forall n \in \omega)(x(n) \in C_n) \}.
		\]
		is a connected component of $Q$ above $C_0$.
		Since $C$ can be viewed as $C = \prod_{n \in \omega} C_n / q$, the conclusion of the lemma holds for $C$ by Lemma \ref{gammansproperty} and Lemma \ref{dominatinglem}.
	\end{proof}
	
	\begin{lem}\label{standardproperty}
		Let $\kappa < \frakc^\exists$ and $p$ an ultrafilter over $\omega$ and put $P = \Gamma^\omega/p$.
		Then for every $C$ in a final segment of connected components of $P$, we have
		\[
		(\forall \seq{v_i : i < \kappa} \text{ with each } v_i \in C \cap V^P) (\exists u \in C \cap U^P) (\forall i < \kappa) (u \not \mathrel{E^P} v_i).
		\]
	\end{lem}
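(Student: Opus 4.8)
The plan is to reduce the statement to a single connected component and then read off the conclusion directly from the definition of $\frakc^\exists$. By the identification of connected components with the $\triangleleft$-order established above, a component of $P = \Gamma^\omega/p$ is indexed by a class $[m] \in \omega^\omega/p$ (with $m_n \ge 2$ for $p$-almost all $n$): the component $C = C_{[m]}$ consists of those $[x]$ with $x(n) \in G_{m_n}$ for $p$-almost all $n$. First I would observe, exactly as in Lemma \ref{nonstandardproperty}, that $C$ can be viewed as the ultraproduct $\prod_n G_{m_n}/p$. Hence $C \cap U^P$ is identified with $\prod_n U^{G_{m_n}}/p = \prod_n \{1,\dots,m_n^3\}/p$, while $C \cap V^P$ consists of classes of slaloms $v$ with $v(n) \subseteq \{1,\dots,m_n^3\}$ and $\abs{v(n)} \le m_n$ for $p$-almost all $n$; and the edge relation becomes $[u] \mathrel{E^P} [v] \iff \{n : u(n) \in v(n)\} \in p$.

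Setting $c(n) = m_n^3$ and $h(n) = m_n$, the $V$-points of $C$ are precisely the classes of elements of $S(c,h)$ and the $U$-points those of $\prod c$. The key numerical point, and the reason $G_n$ is defined as $\Delta_{n^3,n}$, is the convergence $\sum_n h(n)/c(n) = \sum_n 1/m_n^2 < \infty$, which makes $(c,h)$ eligible in the definition of $\frakc^\exists$ and so gives $\frakc^\exists \le \frakc^\exists_{c,h} = \norm{\wLc(c,h)}$. I would make ``final segment'' precise by taking the threshold to be the component indexed by the identity: if $[m] \triangleright [\mathrm{id}]$, then $S := \{n : m_n > n\} \in p$, and on $S$ we have $1/m_n^2 \le 1/n^2$, so $\sum_{n \in S} 1/m_n^2 < \infty$; redefining $m$ on the $p$-null set $\omega \setminus S$ so that it grows quickly produces a representative of the same class with $\sum_n 1/m_n^2 < \infty$, without changing $C$ or any of the data above.

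With such a pair $(c,h)$ fixed, the argument is a direct translation. Given any family $\seq{v_i : i < \kappa}$ of $V$-points of $C$, fix representatives lying in $S(c,h)$, redefining each on a $p$-null set if necessary. Since $\kappa < \frakc^\exists \le \norm{\wLc(c,h)}$, the family $\{v_i : i < \kappa\}$ is too small to witness $\norm{\wLc(c,h)}$, so there is $u \in \prod c$ with $\neg(u \in^\infty v_i)$ for every $i$, that is, $(\forall^\infty n)(u(n) \notin v_i(n))$. Then $[u] \in C \cap U^P$, and for each $i$ the set $\{n : u(n) \in v_i(n)\}$ is finite, hence not in $p$, so $[u] \not\mathrel{E^P} v_i$. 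This is exactly the required conclusion.

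The main obstacle I anticipate is the bookkeeping around the condition $\sum h/c < \infty$: one must confirm that every component in a suitable final segment genuinely yields a pair $(c,h)$ eligible for $\frakc^\exists$, which is what forces the passage to a representative $m$ lying above the identity and the verification that this can be arranged uniformly. Once that is set up, the application of $\kappa < \frakc^\exists$ and the passage from ``cofinitely often'' to ``in $p$'' are routine.
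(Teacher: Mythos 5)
Your proof is correct and takes essentially the same route as the paper's: both identify a component lying above the identity-indexed component with an ultraproduct of slalom structures $\prod_n \Delta_{m_n^3, m_n}/p$, normalize the index function so that $\sum_n m_n/m_n^3 \le \sum_n 1/n^2 < \infty$ makes the pair $(c,h)$ eligible in the definition of $\frakc^\exists$, apply $\kappa < \frakc^\exists \le \frakc^\exists_{c,h}$ to obtain an evader avoiding every $v_i$ cofinitely often, and transfer back through the ultrafilter. The only differences are bookkeeping conventions (the paper replaces the index function by $h'(n) = \max\{h(n), n\}$ and modifies the $v_i$ on the relevant sets, rather than redefining representatives on $p$-null sets as you do).
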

	\begin{proof}
		Let $f \from \omega \to \Gamma$ satisfy $f(n) \in G_n$ for all $n$.
		Let $C_0$ be the connected component that $[f]$ belongs to.
		Take a connected component $C$ such that $C_0 \triangleleft C$ and an element $[g] \in C$.
		Take a function $h \colon \omega \to \omega$ such that $\{ n \in \omega : g(n) \in G_{h(n)} \} \in q$.
		Then $A := \{ n \in \omega : h(n) \ge n \} \in q$.
		Put $h'(n) = \max \{ h(n), n\}$.
		
		Take $\seq{[v_i] : i < \kappa}$ with each $[v_i] \in C \cap V^P$. Then we have
		\[
		B_i := \{ n \in \omega : v_i(n) \in G_{h(n)} \cap V^\Gamma \} \in q.
		\]
		Take $v_i'$ such that $v_i'(n) = v_i(n)$ for $n \in A \cap B_i$ and $v_i'(n) \in [h'(n)^3]^{\le h'(n)}$ for $n\in \omega$.
		The assumption $\kappa < \frakc^\exists$ and the calculation
		\[
		\sum_{n \ge 1} \frac{h'(n)}{h'(n)^3} = \sum_{n \ge 1} \frac{1}{h'(n)^2} \le \sum_{n \ge 1} \frac{1}{n^2} < \infty
		\]
		give a $x \in \prod h'$ such that for all $i < \kappa$, $(\forall^\infty n) (x(n) \not \in v_i'(n))$.
		For each $i < \kappa$, take $n_i$ such that $(\forall n \ge n_i) (x(n) \not \in v_i'(n))$.
		
		Take a point $[u] \in C \cap U^P$ such that $u(n) = x(n)$ for all $n \in A$. Then for all $i < \kappa$ we have
		\[
		\{ n \in \omega : u(n) \not \mathrel{E^\Gamma} v_i(n) \} \supseteq A \cap B_i \cap [n_i, \omega) \in q .
		\]
		Therefore $[u] \not \mathrel{E^P} [v_i]$ for all $i < \kappa$.
	\end{proof}
	
	Assume that $\frakd < \frakc^\exists$.
	Then by Lemma \ref{standardproperty} and Lemma \ref{nonstandardproperty}, for any two ultrafilters $p, q$ over $\omega$, we have $\Gamma^\omega/p \not \simeq (\Gamma_\mathrm{NS})^\omega/q$. So $\neg \KT(\aleph_0)$ holds. We have proved Theorem \ref{ktaleph0impdc}.
	
	\begin{fact}[{{\cite[Lemma 2.3]{klausner2019different}}}]
		$\cov(\nul) \le \frakc^\exists$.
	\end{fact}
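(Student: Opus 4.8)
The plan is to produce, for every pair $(c,h) \in \omega^\omega \times \omega^\omega$ with $\sum_n h(n)/c(n) < \infty$, a Galois--Tukey morphism from $\Cov(\nul)$ to $\wLc(c,h)$, and then to invoke the Blass fact together with the definition of $\frakc^\exists$ as a minimum.

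Fix such a pair. We may assume $c$ is unbounded: if $c$ were bounded, then $\sum_n h(n)/c(n) < \infty$ would force $h(n) = 0$ for all large $n$, so the only slaloms in $S(c,h)$ would be eventually empty, no family of them could $\in^\infty$-cover $\prod c$, and hence $\frakc^\exists_{c,h} = \infty$, making the desired inequality trivial. With $c$ unbounded, regard $\prod c$ as a probability space by giving each coordinate $c(n) = \{0,\dots,c(n)-1\}$ the uniform measure and taking the product measure $\mu$; since infinitely many $c(n) \ge 2$, the measure $\mu$ is atomless, so $(\prod c, \mu)$ is Borel-isomorphic mod null to the usual probability space and therefore $\norm{(\prod c, \nul, \in)} = \cov(\nul)$.

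The crux is that every slalom yields a null set. For $\sigma \in S(c,h)$ set $N_\sigma = \{ x \in \prod c : x \in^\infty \sigma \}$. Then $\mu(\{ x : x(n) \in \sigma(n) \}) = \abs{\sigma(n)}/c(n) \le h(n)/c(n)$, and since $\sum_n h(n)/c(n) < \infty$ the Borel--Cantelli lemma gives $\mu(N_\sigma) = 0$, i.e. $N_\sigma \in \nul$. Now define $(\varphi, \psi) \colon \Cov(\nul) \to \wLc(c,h)$ by letting $\varphi \colon \prod c \to \prod c$ be the identity and $\psi \colon S(c,h) \to \nul$ be $\psi(\sigma) = N_\sigma$. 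The morphism condition requires $\varphi(x) \in^\infty \sigma \implies x \in \psi(\sigma)$ for all $x$ and $\sigma$; but $\varphi(x) \in^\infty \sigma$ is exactly the assertion $x \in N_\sigma = \psi(\sigma)$, so the implication holds trivially. By the Blass fact, $\cov(\nul) = \norm{\Cov(\nul)} \le \norm{\wLc(c,h)} = \frakc^\exists_{c,h}$.

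As this inequality holds for every admissible $(c,h)$, taking the minimum over all $c,h \in \omega^\omega$ with $\sum h/c < \infty$ yields $\cov(\nul) \le \frakc^\exists$. I expect the only genuinely delicate point to be the measure-theoretic bookkeeping — verifying atomlessness and the identification $\norm{(\prod c, \nul, \in)} = \cov(\nul)$, together with disposing of the degenerate bounded-$c$ pairs — whereas the Borel--Cantelli estimate and the verification of the morphism are routine.
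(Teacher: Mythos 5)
Your proof is correct. The paper does not prove this statement at all---it is quoted as a Fact directly from Klausner--Mej\'ia---but your argument (for each admissible pair $(c,h)$, Borel--Cantelli shows that the set $N_\sigma$ of reals caught infinitely often by a slalom $\sigma$ is null, giving a Galois--Tukey morphism from $\Cov(\nul)$ to $\wLc(c,h)$ and hence $\cov(\nul) \le \frakc^\exists_{c,h}$ before taking the minimum) is precisely the standard proof behind the cited lemma, and your side remarks are also sound: the degenerate bounded-$c$ case does make $\frakc^\exists_{c,h}$ infinite under the convention that $\norm{\cdot}$ is $\infty$ when no covering family exists, and the identification of $\cov(\nul)$ on $\prod c$ with the usual $\cov(\nul)$ follows from the measure isomorphism theorem for atomless standard probability spaces.
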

	
	\begin{cor}
		In the random model, $\neg \KT(\aleph_0)$ holds. \qed
	\end{cor}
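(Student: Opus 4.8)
The plan is to obtain the corollary by chaining the cardinal-invariant inequalities already at hand, so that no new combinatorics is needed; the only model-specific input is the standard computation of two cardinal characteristics of the random model. Combining Theorem \ref{ktaleph0impdc} with the preceding Fact yields, under the assumption $\KT(\aleph_0)$, the chain
\[
\cov(\nul) \le \frakc^\exists \le \frakd.
\]
Thus it suffices to produce a model in which $\frakd < \cov(\nul)$, and I claim the random model is one.

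First I would fix what "random model" means: forcing with the measure algebra that adds $\aleph_2$ random reals over a model of $\CH$. Then I would recall that random forcing is $\omega^\omega$-bounding, so every real of the extension is dominated by a ground-model real; hence no dominating reals are added and a ground-model scale of length $\aleph_1$ stays dominating, giving $\frakd = \aleph_1$ in the extension. Second, I would use that each random real avoids every null set coded in the stage over which it is added; a routine ccc bookkeeping argument then shows that fewer than $\frakc = \aleph_2$ null sets cannot cover $\R$, so $\cov(\nul) = \frakc = \aleph_2$.

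Putting these together, in the random model $\frakd = \aleph_1 < \aleph_2 = \cov(\nul)$, so the displayed inequality $\cov(\nul) \le \frakd$ fails. By the contrapositive, $\KT(\aleph_0)$ cannot hold there, which is the assertion. The only point requiring care — and the nearest thing to an obstacle — is the justification of the two cardinal-invariant values; both are classical facts from the computation of the Cichoń diagram in the random model, and neither touches the Keisler-theoretic content, which is entirely absorbed into Theorem \ref{ktaleph0impdc} and the cited Fact.
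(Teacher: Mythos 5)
Your proposal is correct and is exactly the argument the paper intends: the corollary is stated with an immediate \qed because it follows by chaining Theorem \ref{ktaleph0impdc} with the fact $\cov(\nul) \le \frakc^\exists$ to get $\KT(\aleph_0) \Rightarrow \cov(\nul) \le \frakd$, and then invoking the standard Cicho\'n-diagram values $\frakd = \aleph_1 < \aleph_2 = \cov(\nul)$ in the random model ($\omega^\omega$-bounding plus random reals avoiding ground-model null sets), precisely as you do. Your write-up simply makes explicit the details the paper leaves to the reader.
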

	
	\begin{rmk}
		$\frakv^\forall \le \frakc^\exists$ follows from \cite[Lemma 2.6]{klausner2019different}.
		So the implication $\KT(\aleph_0) \implies \frakd \ge \frakc^\exists$ strengthens the implication $\KT(\aleph_0) \implies \frakd \ge \frakv^\forall$.
	\end{rmk}
	
	\begin{rmk}
		In \cite{shelah1992vive}, Shelah constructed a creature forcing that forces the following statements:
		\begin{enumerate}
			\item There are a finite language $\LangL$ and countable $\LangL$-structures $\mathcal{A}, \mathcal{B}$ with $\mathcal{A} \equiv \mathcal{B}$ such that for all ultrafilters $p, q$ over $\omega$, we have $\mathcal{A}^\omega/p \not \simeq \mathcal{B}^\omega/q$.
			\item There is an ultrafilter $p$ over $\omega$ such that for every countable language $\LangL$ and any sequence $\seq{(\mathcal{A}_n, \mathcal{B}_n) : n \in \omega}$ of pairs of finite $\LangL$-structures, if $\prod_{n \in \omega} \mathcal{A}_n/p \equiv \prod_{n \in \omega} \mathcal{B}_n/p$, then these ultraproducts are isomorphic.
		\end{enumerate}
		Shelah himself pointed out in \cite[Remark 2.2]{shelah1992vive} item 2 holds in the random model. On the other hand, we have proved item 1 also holds in the random model.
		Therefore both of above two statements hold in the random model.
	\end{rmk}
	
	\section{$\KT(\aleph_1)$ in forcing extensions}
	
	A theorem by Golshani and Shelah \cite{golshani2021keislershelah} states that $\cov(\meager) = \frakc \land \cf(\frakc) = \aleph_1$ implies $\KT(\aleph_1)$.
	In \cite{golshani2021keislershelah}, it was also proved that $\cf(\frakc) = \aleph_1$ is not necessary for $\KT(\aleph_1)$.
	In this section, we prove that $\cov(\meager) = \frakc$ is also not necessary for $\KT(\aleph_1)$.
	
	\begin{thm}\label{thm:ktaleph1forcing}
		Let $\lambda > \aleph_1$ be a regular cardinal with $\lambda^{<\lambda} = \lambda$.
		Let $\seq{\P_\alpha, \dot{\Q}_\alpha : \alpha < \omega_1}$ be a finite support forcing iteration.
		Suppose that for all $\alpha < \omega_1$, $\forces_\alpha ``\dot{\Q}_\alpha \text{ is ccc and } \abs{\dot{\Q}_\alpha} \le \lambda"$.
		And suppose that for all even $\alpha < \omega_1$, $\forces_\alpha \dot{\Q}_\alpha = \C_\lambda$.
		Here $\C_\lambda$ denotes the Cohen forcing adjoining $\lambda$ many Cohen reals.
		Then, $\forces_{\omega_1} \KT(\aleph_1)$.
	\end{thm}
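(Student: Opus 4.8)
The plan is to exploit the match between the $\aleph_1$-size of the structures and the $\omega_1$-length of the iteration, using the Cohen reals added at even stages as a stage-by-stage substitute for the hypothesis $\cov(\meager) = \frakc$ that drives the proof of \cite[Theorem 3.2]{golshani2021keislershelah}. Work in $V[G]$ with $G$ being $\P_{\omega_1}$-generic. First note that a finite-support iteration of length $\omega_1$ of this shape makes $\frakc = \lambda$, so $\cf(\frakc) = \lambda > \aleph_1$, while $\cov(\meager) = \aleph_1$: for each even stage $\alpha$ a Cohen real coming from $\dot{\Q}_\alpha$ places $\R^{V[G_\alpha]}$ inside a single meager set $M_\alpha$, and since $\cf(\omega_1) > \omega$ every real of $V[G]$ appears at a bounded stage, so the $M_\alpha$ cover $\R$. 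Thus the model genuinely lies outside the scope of the known sufficient conditions for $\KT(\aleph_1)$, and the argument must be tied to the forcing.

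Let $\scrA \equiv \scrB$ be $\LangL$-structures of size $\le \aleph_1$; we may assume both are infinite with universe $\omega_1$. The key preliminary step is a reflection: I would produce a club $C \subseteq \omega_1$ such that for every $\delta \in C$ the initial substructures $\scrA_\delta := \scrA \restriction \delta$ and $\scrB_\delta := \scrB \restriction \delta$ are countable, satisfy $\scrA_\delta \preceq \scrA$ and $\scrB_\delta \preceq \scrB$, and moreover $\scrA_\delta, \scrB_\delta \in V[G_\delta]$. Elementarity holds on a club by L\"owenheim--Skolem; membership in $V[G_\delta]$ holds on a club because, for countable $\delta$, each $R^\scrA \cap \delta^{k}$ is coded by a real and every real of $V[G]$ appears at a bounded stage below $\omega_1$ by ccc, so closing off under the countably many relations lands the reflected structures in $V[G_\delta]$. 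This yields continuous elementary chains $\scrA = \bigcup_{\delta\in C}\scrA_\delta$ and $\scrB = \bigcup_{\delta\in C}\scrB_\delta$ whose $\delta$-th terms live in $V[G_\delta]$. For any nonprincipal $U$ on $\omega$ we then get continuous decompositions $\scrA^\omega/U = \bigcup_{\delta\in C}\scrA_\delta^\omega/U$ and likewise for $\scrB$, since each $f \colon \omega \to \omega_1$ has bounded range.

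I would then build, by recursion along $C$, an increasing coherent sequence of ultrafilters $U_\delta$ on $\Pow(\omega)^{V[G_\delta]}$ together with partial isomorphisms $h_\delta \colon \scrA_\delta^\omega/U \simeq \scrB_\delta^\omega/U$, arranging that $U_\delta$ and $h_\delta$ reference only stage-$\delta$ data and so lie in $V[G_\delta]$, taking unions at limits of $C$. Then $U = \bigcup_\delta U_\delta$ is a nonprincipal ultrafilter on $\omega$ (every real belongs to some $V[G_\delta]$), and $h = \bigcup_\delta h_\delta$ is an isomorphism $\scrA^\omega/U \to \scrB^\omega/U$ by the decompositions above. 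The successor step $\delta \to \delta'$ is where the forcing enters: since $\scrA_\delta,\scrB_\delta,U_\delta,h_\delta$ all lie in $V[G_\delta]$ and $\dot{\Q}_\delta = \C_\lambda$ adds reals Cohen-generic over $V[G_\delta]$, I would extend $U_\delta$ and $h_\delta$ by the generic exactly as in the proof that $\cov(\meager)=\frakc$ yields $\SAT(\aleph_0)$. Concretely, the dense sets $D_n$ and $E_{A,\varphi_1,\dots,\varphi_n}$ used there become dense subsets of Cohen forcing coded in $V[G_\delta]$, so the Cohen generic supplies the functions $f \in \omega^\omega$ witnessing that the relevant finitely satisfiable countable types over $\scrA_\delta$ (respectively $\scrB_\delta$) are realized, and simultaneously decides the new reals of $V[G_{\delta'}]$ into $U_{\delta'}$. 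The point is that Cohen-genericity over $V[G_\delta]$ delivers the full strength of ``$\cov(\meager)=\frakc$ for the countable structures in $V[G_\delta]$'' even though $\cov(\meager)=\aleph_1$ holds globally.

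The main obstacle is the coherent extension of the isomorphism across the chain. Because $\scrA_{\delta'}^\omega/U$ has size $\frakc = \lambda > \aleph_1$, extending $h_\delta$ to $h_{\delta'}$ means matching $\lambda$-many new elements whose types over $\scrA_\delta^\omega/U$ involve $\lambda$-many parameters, so the bare $\aleph_1$-saturation of the ultrapowers does not realize them. The resolution I would pursue is fiberwise: an element $[f]$ of $\scrA_{\delta'}^\omega/U$ with $f\colon\omega\to\delta'$ has, by the {\L}o\'{s} theorem, a type over $\scrA_\delta^\omega/U$ determined coordinatewise by the types $\mathrm{tp}^{\scrA_{\delta'}}(f(n)/\scrA_\delta)$ together with $U$, so matching $[f]$ reduces to choosing $f'(n)\in\scrB_{\delta'}$ realizing the corresponding type over $\scrB_\delta$ coordinate by coordinate, which is once more a countable-type problem handled by the stage-$\delta$ Cohen generic. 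The delicate points are ensuring that this coordinatewise choice is compatible with the action of $h_\delta$ on the old parameters, that extending $U$ does not retroactively disturb earlier {\L}o\'{s} computations, and that the whole system stays coherent through limit stages of $C$. I expect this bookkeeping, rather than any single new idea, to constitute the bulk of the proof, adapting the argument of \cite{golshani2021keislershelah} with the $\omega_1$-chain replacing the role of $\cf(\frakc)=\aleph_1$.
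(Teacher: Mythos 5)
Your scaffolding matches the paper's proof: the reflection to a continuous chain of countable elementary substructures lying in the intermediate models, the observation that every $f \from \omega \to \omega_1$ in $V[G]$ has bounded range and appears in some $V[G_\delta]$, and the plan to build, by recursion along $\omega_1$, an increasing family of filters together with partial matchings, using the Cohen reals adjoined at the relevant stage, and to take unions at the end. (The paper organizes this as a single induction of length $\lambda \cdot \omega_1$, keeping filters plus a global enumeration of $\Pow(\omega)^{V[G]}$ instead of your stage-wise ultrafilters on $\Pow(\omega)^{V[G_\delta]}$; that difference is cosmetic.) The gap is exactly at the step you defer to ``bookkeeping'': how to produce, for a new function $f \in \scrA_{\delta'}^\omega \cap V[G_\delta]$, a partner $f'$ compatible with $h_\delta$.

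Your fiberwise sketch fails for two concrete reasons. First, ``the corresponding type over $\scrB_\delta$'' is undefined: $h_\delta$ acts on classes of sequences modulo the filter, not on elements, and it induces no map from $\scrA_\delta$ to $\scrB_\delta$ --- these are merely elementarily equivalent countable structures, in general non-isomorphic --- so the coordinate type $\mathrm{tp}^{\scrA_{\delta'}}(f(n)/\scrA_\delta)$ has no canonical counterpart over $\scrB_\delta$ to realize. Second, even granting such a correspondence, a complete type over the countably infinite parameter set $\scrB_\delta$ is in general omitted in the countable structure $\scrB_{\delta'}$, so ``choosing $f'(n)$ realizing it'' is impossible; no Cohen real can realize a type that has no realization. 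What the paper's proof does is genuinely weaker and is exactly what genericity can deliver: the partner is itself generic, $f' = \pi \circ c$ for a bijection $\pi \from \omega \to \scrB_{\delta'}$ and a Cohen real $c$ over a model containing all data constructed so far, and one demands only that for each formula $\varphi$, each finite tuple of previously matched pairs $[g_j] \mapsto [g_j'] = h_\delta([g_j])$, and each set $A$ in the current filter, the \emph{agreement set}
\[
Y = \{ k \in \omega : \scrA_{\delta'} \models \varphi(f(k), g_1(k), \dots, g_n(k)) \Leftrightarrow \scrB_{\delta'} \models \varphi(f'(k), g_1'(k), \dots, g_n'(k)) \}
\]
meets $A$, so that all such sets can be added to the filter; elementarity of the final map then holds only modulo the final ultrafilter $U$, by {\L}o\'{s}'s theorem applied once at the very end (this is also how the paper disposes of your worry about ``retroactively disturbing earlier {\L}o\'{s} computations''). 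Moreover, the density argument establishing this is not the one behind the sets $D_n, E_{A,\varphi_1,\dots,\varphi_n}$ of the $\SAT(\aleph_0)$ proof you invoke --- those realize types whose parameters are already fixed, i.e.\ saturation, which your own first paragraph correctly notes is unavailable here. It requires the inductive hypothesis that the agreement sets of all \emph{earlier} pairs already belong to the filter, applied to the formula $\psi \equiv \exists y \bigwedge_{\iota} \varphi^{+}_{\iota}$, in order to transport a witness in $\scrA_{\delta'}$ at a coordinate $k$ to a witness in $\scrB_{\delta'}$ that the Cohen condition can take as its value at $k$. This agreement-set induction (condition (10) in the paper's proof) together with its density argument is the engine of the whole theorem; the compatibility ``with the action of $h_\delta$ on the old parameters'' is not bookkeeping but is precisely the point where the type-realization picture has to be replaced by the filter-extension picture, and that replacement is what your proposal is missing.
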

	
	\begin{proof}
		Let $G$ be a $(V, \P_{\omega_1})$-generic filter.
		
		Let $\LangL$ be a countable language and $M^0 \equiv M^1$ be two $\LangL$-structures of size $\le \aleph_1$ in $V[G]$.
		Take sequences $\seq{M^l_i : i < \omega_1}$ for $l = 0, 1$ that are increasing and continuous such that each $M^l_i$ is countable elementary substructure of $M^l$ and $M^l = \bigcup_{i < \omega_1} M^l_i$.
		We can take an increasing sequence $\seq{\alpha_i  : i < \omega_1}$ of even ordinals such that $M^l_i \in V[G_{\alpha_i + 1}]$ for every $l < 2$ and $i < \omega_1$.
		
		For $i < \omega_1$ and $\beta < \lambda$, let $c^i_\beta$ be the $\beta$-th Cohen real added by $\dot{\Q}_{\alpha_i}$.
		
		Take an enumeration $\seq{X_\gamma : \gamma < \lambda \cdot \omega_1 }$ of $\Pow(\omega)$ such that $\seq{X_\gamma : \gamma < \lambda \cdot (i+1)} \in V[G_{\alpha_i + 1}]$ for every $i < \omega_1$.
		We can take such a sequence. The reason for this is that we can take $\seq{\dot{X}_\gamma : \lambda \cdot i \le \gamma < \lambda \cdot (i+1)}$ as an enumeration of $\P_{\alpha_i+1}$ nice names for subsets of $\omega$ and put $X_\gamma = (\dot{X}_\gamma)^G$.
		
		For each $l < 2$, take an enumeration $\seq{f^l_\gamma : \gamma < \lambda \cdot \omega_1}$ of $(M^l)^\omega$ such that $f^l_{\lambda \cdot i + \beta} \in (M^l_i)^\omega$ for every $i < \omega_1$ and $\beta < \lambda$ and $\seq{f^l_\gamma : \gamma < \lambda \cdot (i+1)} \in V[G_{\alpha_i + 1}]$.
		
		For $\lambda' < \lambda$, let $G_{\alpha_i, \lambda'}$ denote $G \cap (\P_{\alpha_i} \ast \C_{\lambda'})$.
		
		Now we construct a sequence of quadruples $\seq{(U_\gamma, g^0_\gamma, g^1_\gamma, \lambda_\gamma) : \gamma < \lambda \cdot \omega_1}$ by induction so that the following properties hold.
		
		\begin{enumerate}
			\item Each $U_\gamma$ is a filter over $\omega$.
			\item For every $l < 2$, $i < \omega_1$, $\beta < \lambda$ and $\gamma = \lambda \cdot i + \beta$, $g^l_{\gamma} \in (M^l_i)^\omega \cap V[G_{\alpha_i, \lambda_\gamma}]$.
			\item For every $l < 2$ and $i < \omega_1$, $\seq{g^l_\gamma : \gamma < \lambda \cdot (i+1)} \in V[G_{\alpha_i + 1}]$. \label{seqgappers}
			\item Each $\lambda_\gamma$ is an ordinal below $ \lambda$. 
			For $\lambda \cdot i \le \gamma \le \gamma' < \lambda \cdot (i+1)$, we have $\lambda_\gamma \le \lambda_{\gamma'}$.
			\item For $i < \omega_1$ and $l < 2$, $\{ 	g^l_\gamma : \gamma < \lambda \cdot i \} = \{ 	f^l_\gamma : \gamma < \lambda \cdot i \}$.
			\item If $\lambda \cdot i \le \gamma < \lambda \cdot (i+1)$, then $U_\gamma \in V[G_{\alpha_i}, \lambda_\gamma]$.
			\item If $\gamma < \delta < \lambda \cdot \omega_1$, then $U_\gamma \subset U_\delta$.
			\item If $\gamma < \lambda \cdot \omega_1$ is a limit ordinal, then $U_\gamma = \bigcup_{\delta < \gamma} U_\delta$.
			\item $X_\gamma \in U_{\gamma + 1}$ or $\omega \setminus X_\gamma \in U_{\gamma+1}$.
			\item If $\varphi(x_1, \dots, x_n)$ is a $\LangL$-formula, $\gamma = \lambda \cdot i + \beta$ and $\gamma_1, \dots, \gamma_n \le \gamma$, then $Y_{\varphi, \gamma_1, \dots, \gamma_n}$ defined below belongs to $U_{\gamma+1}$:
			\[
			Y_{\varphi, \gamma_1, \dots, \gamma_n} = \{ k \in \omega : M^0_i \models \varphi(g^0_{\gamma_1}(k), \dots, 	g^0_{\gamma_n}(k)) \Leftrightarrow M^1_i \models \varphi(g^1_{\gamma_1}(k), \dots, g^1_{\gamma_n}(k)) \}
			\] \label{indhyp2}
		\end{enumerate}
		
		\noindent(\textit{Construction}) First we let $U_0$ be the set of cofinite subsets of $\omega$.
		
		Suppose that $\seq{U_\delta : \delta \le \gamma}$ and $\seq{g^0_\delta, g^1_\delta, \lambda_\delta : \delta < \gamma}$ are defined. Now we will define $g^0_\gamma, g^1_\gamma, \lambda_\gamma$ and $U_{\gamma+1}$.
		Take $i$ and $\beta$ such that $\gamma = \lambda \cdot i + \beta$.
		
		Suppose that $\gamma$ is even.
		
		Let $g^0_\gamma = f^0_{\varepsilon_\gamma}$, where $\varepsilon_\gamma$ is the minimum ordinal such that $f^0_{\varepsilon_\gamma}$ does not belong to $\{g^0_\delta : \delta < \gamma \}$.
		
		Take $\lambda' < \lambda$ such that $M^0_i, M^1_i, \seq{g^0_\delta : \delta \le \gamma}, \seq{g^1_\delta : \delta < \gamma} \in V[G_{\alpha_i, \lambda'}]$. Put $\lambda_\gamma = \lambda' + 1$. Take a bijection $\pi^1_i \colon \omega \to M^1_i$ in $V[G_{\alpha_i, \lambda'}]$.
		Define $g^1_\gamma$ by $g^1_\gamma = \pi^1_i \circ c^i_{\lambda'}$.
		
		Put $\mathcal{Y} = \{ Y_{\varphi, \gamma_1, \dots, \gamma_n} :  \varphi(x_1, \dots, x_n) \text{ is a } \LangL\text{-formula and }\gamma_1, \dots, \gamma_n \le \gamma \}$.
		Now we show $U_\gamma \cup \mathcal{Y}$ has the finite intersection property. In order to show it, let $X \in U_\gamma$, $\seq{\varphi_\iota : \iota \in I}$ is a finite sequence of $\LangL$-formulas and $\gamma^\iota_1, \dots, \gamma^\iota_{n_\iota}$ for $\iota \in I$ are ordinals that are less than $\gamma$.
		It suffices to show that the set $D \in V[G_{\alpha_i, \lambda'}]$ defined below is a dense subset of $\C$:
		\begin{align*}
			D = \{ p \in \C:\  & (\exists k \in \dom(p) \cap X)(\forall \iota \in I)  \\
			& \hspace{0.4cm} M^0_i \models \varphi_\iota(g^0_{\gamma^\iota_1}(k), \dots g^0_{\gamma^\iota_{n_\iota}}(k), g^0_\gamma(k)) \Leftrightarrow M^1_i \models \varphi_\iota(g^1_{\gamma^\iota_1}(k), \dots g^1_{\gamma^\iota_{n_\iota}}(k), \pi^1_i(p(k))) \}.
		\end{align*}
		
		We now prove this. Let $p \in \C$.
		
		For each $k \in \omega$ and $\iota \in I$, put
		\[
		v(k, \iota) = \begin{cases}
			1 & \text{if }M^0_i \models \varphi_\iota(g^0_{\gamma^\iota_1}(k), \dots, g^0_{\gamma^\iota_{n_\iota}}(k), g^0_\gamma(k)) \\
			0 & \text{otherwise.}
		\end{cases}
		\]
		And for each $k \in \omega$ put
		\[
		v(k) = \seq{v(k, \iota) : \iota \in I}.
		\]
		Then by finiteness of ${}^I 2$, for some $v_0 \in {}^I 2$, we have $\omega \setminus v^{-1}(v_0) \not \in U_\gamma$.
		
		For each $\iota \in I$, put
		\[
		\varphi_\iota^+(x^\iota_1, \dots, x^\iota_{n_\iota}, y) \equiv
		\begin{cases}
			\varphi_\iota(x^\iota_1, \dots, x^\iota_{n_\iota}, y) & \text{if } v_0(\iota) = 1 \\
			\neg \varphi_\iota(x^\iota_1, \dots, x^\iota_{n_\iota}, y) & \text{otherwise.}
		\end{cases}
		\]
		Put
		\[
		\psi \equiv \exists y \bigwedge_{\iota \in I} \varphi_\iota^+(x^\iota_1, \dots, x^\iota_{n_\iota}, y).
		\]
		Then by the induction hypothesis (\ref{indhyp2}), $Y_{\psi, \seq{\gamma^\iota_1, \dots \gamma^\iota_{n_\iota} : \iota \in I}} \in U_\gamma$.
		So take $k \in X \cap v^{-1}(v_0) \cap Y_{\psi, \seq{\gamma^\iota_1, \dots \gamma^\iota_{n_\iota} : \iota \in I}} \setminus \dom(p)$.
		
		Since $M^0_i \models \psi(\seq{g^0_{\gamma^\iota_1}(k), \dots g^0_{\gamma^\iota_{n_\iota}}(k) : \iota \in I})$, we have $M^1_i \models \psi(\seq{g^1_{\gamma^\iota_1}(k), \dots g^1_{\gamma^\iota_{n_\iota}}(k) : \iota \in I})$.
		
		By the definition of $\psi$, we can take $y \in M^1_i$ such that $M^1_i \models \varphi_\iota^+(g^1_{\gamma^\iota_1}(k), \dots, g^1_{\gamma^\iota_{n_\iota}}(k), y)$ for every $\iota \in I$.
		We now put $q = p \cup \{ (k, (\pi^1_i)^{-1}(y)) \} \in \C$. This witnesses denseness of $D$.
		
		Now we define $U_{\gamma+1}$ as the filter generated by $U_\gamma \cup \mathcal{Y} \cup \{ X_\gamma \}$ or the filter generated by $U_\gamma \cup \mathcal{Y} \cup \{ \omega \setminus X_\gamma \}$.

		When $\gamma$ is odd, do the same construction above except for swapping 0 and 1.
		Since the above construction below $\lambda \cdot (i+1)$ can be performed in $V[G_{\alpha_i + 1}]$, (\ref{seqgappers}) in the induction hypothesis holds.
		(\textit{End of Construction.})
		
		Now we put $U = \bigcup_{\gamma < \lambda \cdot \omega_1} U_\gamma$, which is an ultrafilter over $\omega$.
		Then the function
		\[
		\seq{([g^0_\gamma]_U, [g^1_\gamma]_U) : \gamma < \lambda \cdot \omega_1}
		\]
		witnesses $(M^0)^\omega / U \simeq (M^1)^\omega / U$.
	\end{proof}

	\begin{cor}
		$\Con(\ZFC) \rightarrow \Con(\ZFC + \cof(\nul) = \aleph_1 < \frakc + \KT(\aleph_1))$.
	\end{cor}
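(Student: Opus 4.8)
The plan is to obtain the model as a finite support iteration of exactly the form governed by Theorem~\ref{thm:ktaleph1forcing}, using the mandatory Cohen stages to secure $\KT(\aleph_1)$ and inserting localization stages to drive $\cof(\nul)$ down to $\aleph_1$ while keeping $\frakc$ large. First I would start in a ground model $V \models \ZFC + \mathrm{GCH}$ (consistent relative to $\ZFC$, e.g.\ by passing to $L$) and set $\lambda = \aleph_2$, so that $\lambda > \aleph_1$ is regular and $\lambda^{<\lambda} = \lambda$, meeting the hypothesis on $\lambda$. Then I would build a finite support iteration $\seq{\P_\alpha, \dot\Q_\alpha : \alpha < \omega_1}$ by setting $\dot\Q_\alpha = \C_\lambda$ for even $\alpha$ and letting $\dot\Q_\alpha$ be the localization (slalom) forcing $\mathrm{LOC}$ of $V[G_\alpha]$ for odd $\alpha$: the $\sigma$-linked poset of conditions $(s, F)$, with $s$ a finite slalom satisfying $\abs{s(i)} \le i$ and $F$ a finite set of reals, whose generic object is a slalom $\varphi_\alpha$ with $\abs{\varphi_\alpha(n)} \le n$ localizing every real of $V[G_\alpha]$, i.e.\ $(\forall f \in (\omega^\omega)^{V[G_\alpha]})(\forall^\infty n)\, f(n) \in \varphi_\alpha(n)$.

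Since $\C_\lambda$ is ccc of size $\lambda$ and $\mathrm{LOC}$ is $\sigma$-linked (hence ccc) of size $\frakc \le \lambda$ at each stage, every iterand is forced to be ccc of size $\le \lambda$, and by construction $\dot\Q_\alpha = \C_\lambda$ at all even stages; thus Theorem~\ref{thm:ktaleph1forcing} applies and gives $\forces_{\omega_1} \KT(\aleph_1)$. For the size of the continuum I would observe that $\P_{\omega_1}$ is ccc and, as a finite support iteration of length $\omega_1$ with iterands of size $\le \lambda$, has size at most $\lambda^{\aleph_1} = \lambda$ (using $\aleph_1 < \lambda$ and $\lambda^{<\lambda} = \lambda$); since the first stage alone adjoins $\lambda$ Cohen reals, $\forces_{\omega_1} \frakc = \lambda$.

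The remaining task is to compute $\cof(\nul)$ in $V[G]$, and here I would combine two standard facts. The first is that in a ccc finite support iteration of length $\omega_1$ every real of $V[G]$ already appears in some $V[G_\alpha]$ with $\alpha < \omega_1$, because a nice name for a real uses only countably many conditions and hence boundedly many coordinates. The second is Bartoszyński's characterization (see \cite{bartoszynski1995set})
\[
\cof(\nul) = \min\{\, \abs{\Phi} : \Phi \text{ a family of slaloms } \varphi \text{ with } \abs{\varphi(n)} \le n \text{ localizing every } f \in \omega^\omega \,\}.
\]
Given $f \in (\omega^\omega)^{V[G]}$, choose $\alpha < \omega_1$ with $f \in V[G_\alpha]$ and an odd $\beta > \alpha$; then $\varphi_\beta$ localizes all reals of $V[G_\beta] \supseteq V[G_\alpha]$, so it localizes $f$. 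Hence $\{\varphi_\beta : \beta < \omega_1 \text{ odd}\}$ is a localizing family of size $\aleph_1$, so $\cof(\nul) \le \aleph_1$ and therefore $\cof(\nul) = \aleph_1$. Together with the previous paragraph this yields $V[G] \models \cof(\nul) = \aleph_1 < \lambda = \frakc \land \KT(\aleph_1)$, proving the corollary.

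The crux is the last computation: I must make sure that interleaving the $\lambda$-sized Cohen blocks at even stages does not spoil the localizing family, which is precisely why the localization stages are placed cofinally — each real, including every Cohen real adjoined at an even stage, first shows up at a countable stage and is therefore captured by a later generic slalom. The supporting verifications (that $\mathrm{LOC}$ is $\sigma$-linked and its generic slalom localizes all ground model reals, and that a finite support ccc iteration reflects reals to proper initial segments) are routine but are where the care must be taken.
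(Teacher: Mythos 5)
Your proof is correct, and its skeleton coincides with the paper's: fix a ground model and a regular $\lambda > \aleph_1$ with $\lambda^{<\lambda}=\lambda$, apply Theorem~\ref{thm:ktaleph1forcing} with $\C_\lambda$ at the even stages, and use the odd stages for a ccc forcing of size $\le\lambda$ whose $\omega_1$ many generics, combined with the fact that every real of $V[G]$ appears in some $V[G_\alpha]$ with $\alpha<\omega_1$, witness $\cof(\nul)=\aleph_1<\lambda=\frakc$. The one genuine difference is the choice of the odd-stage iterand. The paper takes the amoeba forcing $\mathbb{A}$, whose generic yields a single null set containing every null set of the intermediate extension; the $\aleph_1$ many amoeba generics are then outright cofinal in the null ideal, so no characterization theorem is needed. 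You instead take localization forcing and route the computation through Bartoszy\'{n}ski's theorem (see \cite{bartoszynski1995set}) that $\cof(\nul)$ is the least cardinality of a family of slaloms $\varphi$ with $\abs{\varphi(n)}\le n$ localizing all reals. Both routes work and cost about the same; yours trades the covering property of the amoeba generic for the slalom characterization, which meshes naturally with the $\Lc(c,h)$ formalism of Section~1. One small repair is needed in your parenthetical definition of $\mathrm{LOC}$: with $s$ a finite slalom and $F$ an \emph{arbitrary} finite set of reals, a stem of length $n$ cannot in general be extended once $\abs{\{f(n): f\in F\}}>n$, so density of long stems fails and the generic need not be a total localizing slalom. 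The standard localization forcing imposes a constraint such as $\abs{F}\le\abs{\dom s}$, so that new reals may enter a condition only after the stem has grown; with that constraint the forcing is indeed $\sigma$-linked and its generic localizes all ground-model reals, exactly as your argument requires.
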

	\begin{proof}
		Let $\mathbb{A}$ denote the amoeba forcing.
		Letting
		\[
		\forces_\alpha \dot{\Q}_\alpha = \mathbb{A}
		\]
		for every odd $\alpha$ gives the consistency.
	\end{proof}
	
	\section{Open questions}
	
	\begin{question}
		\begin{enumerate}
			\item Does $\KT(\aleph_1)$ imply a stronger hypothesis than $\mcf = \aleph_1$? In particular does $\KT(\aleph_1)$ imply $\non(\meager) = \aleph_1$?
			\item Does $\KT(\aleph_0)$ imply a stronger hypothesis than $\frakc^\exists \le \frakd$? In particular does $\KT(\aleph_0)$ imply $\non(\meager) \le \cov(\meager)$?
		\end{enumerate}
	\end{question}
	
	\section*{Acknowledgement}
	
	The author is grateful to his current supervisor Yasuo Yoshinobu and his future supervisor Jörg Brendle. Both of them gave him helpful comments. In particular, the idea of the proof of Theorem \ref{thm:ktaleph1forcing} is due to Brendle.
	
	\nocite{*}
	\printbibliography
	
\end{document}